\newtheorem{theorem}{Theorem}[section]
\newtheorem{lemma}[theorem]{Lemma}
\newtheorem{corollary}[theorem]{Corollary}
\theoremstyle{definition}
\newtheorem{question}[theorem]{Question}
\newtheorem{definition}[theorem]{Definition}
\newcommand{\R}{\mathds{R}}
\newcommand{\rr}{\mathds{R}}
\newcommand{\zz}{\mathds{Z}}
\DeclareMathOperator{\conv}{conv}
\DeclareMathOperator{\ind}{Ind}
\def\blfootnote{\gdef\@thefnmark{}\@footnotetext}
\title{Lifting methods in mass partition problems}
\date{}
\author{Pablo Sober\'{o}n}
\address{Baruch College, City University of New York.  New York, NY 10010}
\email{pablo.soberon-bravo@baruch.cuny.edu}
\author{Yuki Takahashi}
\address{Grinnell College. Grinnell, IA 50112}
\email{takahash@grinnell.edu}
\thanks{This project was done as part of the 2021 New York Discrete Math REU, funded by NSF grant DMS 2051026.  Sober\'on’s research is supported by NSF grant DMS 2054419 and a PSC-CUNY TRADB52 award.   Takahashi's research is supported by Grinnell College Internship Funding.}
\begin{document}

\begin{abstract}
    Many results in mass partitions are proved by lifting $\rr^d$ to a higher-dimensional space and dividing the higher-dimensional space into pieces.  We extend such methods to use lifting arguments to polyhedral surfaces.  Among other results, we prove the existence of equipartitions of $d+1$ measures in $\rr^d$ by parallel hyperplanes and of $d+2$ measures in $\rr^d$ by concentric spheres.
    
    For measures whose supports are sufficiently well separated, we prove results where one can cut a fixed (possibly different) fraction of each measure either by parallel hyperplanes, concentric spheres, convex polyhedral surfaces of few facets, or convex polytopes with few vertices.
   
\end{abstract}

\maketitle

\section{Introduction}

In a standard mass partition problem, we are given measures or finite families of points in a Euclidean space and we seek to partition the ambient space into regions that meet certain conditions.  Some conditions determine how we split the measures and the sets of points.  For instance, in an equipartition, we ask that each part has the same size in each measure or contains the same number of points of each set.  Some conditions restrict the types of partitions which are allowed, such as partition by a single hyperplane.  Determining whether such partitions always exist leads to a rich family of problems.  Solutions to these problems often require topological methods and can have computational applications \cites{matousek2003using, Zivaljevic2017, RoldanPensado2021}.  The quintessential mass partition result is the ham sandwich theorem, conjectured by Steinhaus and proved by Banach \cite{Steinhaus1938}.

\begin{theorem}[Ham sandwich theorem]
	Let $d$ be a positive integer and $\mu_1, \ldots, \mu_d$ be finite measures of $\rr^d$.  Then, there exists a hyperplane $H$ of $\rr^d$ so that its two closed half-spaces $H^+$ and $H^-$ satisfy
	\begin{align*}
		\mu_i (H^+) & \ge \frac{1}{2}\mu_i (\rr^d), \\
		\mu_i (H^-) & \ge \frac{1}{2}\mu_i (\rr^d) \qquad \mbox{for $i=1,\ldots,d$.}
	\end{align*}
\end{theorem}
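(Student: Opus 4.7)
The plan is to parametrize the affine hyperplanes of $\rr^d$ by unit vectors in $\rr^{d+1}$ and then apply the Borsuk--Ulam theorem; this is the prototypical ``lifting'' argument that the paper promises to extend.

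First I would lift each $x \in \rr^d$ to $\tilde{x} = (x,1) \in \rr^{d+1}$. For each $v \in S^d$, set $H^+(v) = \{x \in \rr^d : \langle \tilde{x}, v\rangle \geq 0\}$ and $H^-(v) = \{x \in \rr^d : \langle \tilde{x}, v\rangle \leq 0\}$, whose common boundary is an affine hyperplane $H(v)$ of $\rr^d$. Every affine hyperplane arises in this way, and swapping $v$ with $-v$ interchanges $H^+(v)$ and $H^-(v)$. Next I would package the measures into a single continuous map $f : S^d \to \rr^d$ by setting $f(v) = (\mu_1(H^+(v)), \ldots, \mu_d(H^+(v)))$, working first under the simplifying assumption that no $\mu_i$ has atoms or charges any hyperplane. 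Then $g(v) = f(v) - f(-v)$ is a continuous odd map from $S^d$ to $\rr^d$, so the Borsuk--Ulam theorem provides some $v_0 \in S^d$ with $g(v_0) = 0$. Unwinding the definitions, this forces $\mu_i(H^+(v_0)) = \mu_i(H^-(v_0)) = \tfrac{1}{2}\mu_i(\rr^d)$ for every $i$, and $H(v_0)$ is the desired hyperplane.

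The main obstacle is the point I glossed over: for arbitrary finite measures the coordinates of $f$ need not be continuous, so Borsuk--Ulam cannot be invoked directly. I would handle this by approximation. First restrict attention to a large closed ball $B$ containing the supports of all $\mu_i$, so that the relevant hyperplanes are parametrized by a compact subset of $S^d$. Then regularize each $\mu_i$ by convolving with a radially symmetric mollifier $\varphi_\varepsilon$ to obtain atomless $\mu_i^\varepsilon$, and apply the argument above to produce equipartitioning vectors $v_\varepsilon \in S^d$. Since $S^d$ is compact, I can extract a convergent subsequence $v_{\varepsilon_k} \to v_\ast$, and a routine semicontinuity check using the \emph{closed} half-spaces $H^\pm(v_\ast)$ shows that the strict equalities from the smooth case survive as the inequalities $\mu_i(H^\pm(v_\ast)) \geq \tfrac{1}{2}\mu_i(\rr^d)$ demanded by the statement. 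The two mildly delicate points are ensuring that the vectors $v_\varepsilon$ do not degenerate (prevented by the initial restriction to $B$) and verifying the limit passes to closed rather than open half-spaces.
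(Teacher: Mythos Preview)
The paper does not actually prove the ham sandwich theorem; it is quoted in the introduction as a classical background result (attributed to Steinhaus/Banach, with the general-measure version to Stone and Tukey) and is then \emph{used} as a black box in the proofs of \cref{thm:circular-sandwich}, \cref{thm:cylinder}, and \cref{thm:simple-wedge}. So there is no ``paper's own proof'' to compare against.

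That said, your argument is the standard and correct one: lift $\rr^d$ into $\rr^{d+1}$, parametrize oriented half-spaces by $S^d$, and apply Borsuk--Ulam to the odd map $v\mapsto f(v)-f(-v)$; then pass from nice measures to arbitrary finite measures by mollification and compactness, using upper semicontinuity of $v\mapsto \mu_i(H^+(v))$ on closed half-spaces to retain the inequalities in the limit. One small wrinkle: you assume the supports lie in a fixed ball $B$, but arbitrary finite measures on $\rr^d$ need not have compact support. This is easily repaired---either truncate each $\mu_i$ to a ball carrying all but $\delta$ of its mass before mollifying, or argue directly that a limit $v_\ast=\pm e_{d+1}$ (the ``hyperplane at infinity'') is impossible because it would force one closed half-space to be empty and hence violate $\mu_i(H^-)\ge\tfrac12\mu_i(\rr^d)$ for any nonzero $\mu_i$. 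With that patch, your sketch goes through.
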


If we further ask that $\mu_i(H') = 0$ for each hyperplane $H'$ and every $i=1,\ldots, d$, the inequalities above are equalities.  Stone and Tukey proved the ham sandwich theorem for general measures \cite{Stone:1942hu}.  They also proved the polynomial ham sandwich theorem which states that \textit{any $\binom{d+k}{k}-1$ measures in $\rr^d$ can be halved with a polynomial on $d$ variables of degree at most $k$.}  Even though this is a far-reaching generalization of the ham sandwich theorem, its proof relies on a simple trick.  We lift $\rr^d$ to $\rr^{\binom{d+k}{k}-1}$ by the Veronese map and apply the ham sandwich theorem in the higher-dimensional space.

In this paper, we prove several mass partition results by lifting $\rr^d$ to higher-dimensional spaces, particularly $\rr^{d+1}$, in new ways.  In \cref{sec:sines-and-spheres}, we revisit a known result about equipartitions of measures with spheres and prove a new result about equipartitions of three measures in $\rr^2$ using a sinusoidal curve of fixed period.  Then, instead of lifting to higher-dimensional spaces via smooth maps, such as the Veronese maps, we lift to polyhedral surfaces in $\rr^{d+1}$.  This forces us to use the ham sandwich theorem for general measures, which is interesting on its own.

One of our main results is the following ham sandwich theorem for parallel hyperplanes.

\begin{theorem}\label{thm:parallel-hyperplanes}
Let $d$ be a positive integer and $\mu_1, \ldots, \mu_{d+1}$ be $d+1$ finite measures in $\rr^d$, each absolutely continuous with respect to the Lebesgue measure.  Then, there exist two parallel hyperplanes $H_1, H_2$ so that the region between them contains exactly half of each measure.
\end{theorem}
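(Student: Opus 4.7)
The plan is, for each direction $v \in S^{d-1}$, to lift $\rr^d$ to $\rr^{d+1}$ by the polyhedral map $\phi_v(x) = (x, |\langle v, x \rangle|)$, apply the ham sandwich theorem (for general measures) in $\rr^{d+1}$ to the $d+1$ pushforward measures, and then vary $v$ by a topological argument so that the cut we obtain comes from two \emph{parallel} hyperplanes of $\rr^d$.

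First I would verify the geometric correspondence. Let $\Sigma_v$ denote the image of $\phi_v$, a polyhedral ``V''-shaped surface, and let $\tilde\mu_i^v := (\phi_v)_* \mu_i$. These are $d+1$ finite measures on $\rr^{d+1}$ supported on $\Sigma_v$, and the absolute continuity of each $\mu_i$ with respect to Lebesgue guarantees that $\tilde\mu_i^v$ charges no affine hyperplane of $\rr^{d+1}$ not containing a face of $\Sigma_v$. The general-measure ham sandwich theorem then produces a hyperplane $H_v \subset \rr^{d+1}$ halving each $\tilde\mu_i^v$. Discarding the degenerate vertical case, write $H_v = \{(x, z) : z = \langle a_v, x\rangle + c_v\}$; a direct computation shows that $H_v \cap \Sigma_v$ projects to the two hyperplanes of $\rr^d$ with normals $v - a_v$ and $v + a_v$, and that the projection of the ``below-$H_v$'' portion of $\Sigma_v$ is the convex region $\{x : |\langle v, x\rangle| \le \langle a_v, x\rangle + c_v\}$, which by the halving property has mass exactly $\mu_i(\rr^d)/2$ for each $i$. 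This convex region is a slab precisely when $a_v$ is a scalar multiple of $v$.

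It therefore suffices to find $v^* \in S^{d-1}$ with $a_{v^*} \parallel v^*$. Because $\phi_{-v} = \phi_v$ we have $a_{-v} = a_v$, so (after a perturbation/limiting argument to handle possible non-uniqueness of the ham sandwich hyperplane for these singular pushforward measures) the map $v \mapsto a_v$ is a continuous even map $S^{d-1} \to \rr^d$. Setting $\beta(v) := a_v - \langle a_v, v \rangle v \in v^\perp$, we obtain an even continuous section of the tangent bundle $TS^{d-1}$, and we need this section to vanish somewhere. I would argue this via a Stiefel--Whitney class computation: $\beta$ descends to a section of the rank-$(d-1)$ bundle $E \to \mathbb{RP}^{d-1}$ whose fiber at $[v]$ is $v^\perp \subset \rr^d$, and from $E \oplus \gamma \cong \underline{\rr}^d$, with $\gamma$ the tautological line bundle, one obtains $w(E) = (1+x)^{-1} = 1 + x + \cdots + x^{d-1}$ in $H^*(\mathbb{RP}^{d-1}; \zz/2)$, so $w_{d-1}(E) = x^{d-1} \ne 0$ and $E$ admits no nowhere-zero section. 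At a zero $v^*$ of $\beta$ we have $a_{v^*} \parallel v^*$, and the slab between the resulting two parallel hyperplanes of $\rr^d$ contains exactly half of each $\mu_i$.

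The main obstacle is this topological step: the relevant symmetry is $a_{-v} = a_v$ rather than $a_{-v} = -a_v$, so a direct Borsuk--Ulam argument does not apply and one is forced into a characteristic-class calculation (or, equivalently, a Lefschetz fixed-point argument for the induced self-map of $\mathbb{RP}^{d-1}$, which factors through $S^{d-1}$ and thus has even degree, producing an odd and hence nonzero Lefschetz number). The continuity of the selection $v \mapsto H_v$ for the singular pushforward measures is a separate but standard technicality.
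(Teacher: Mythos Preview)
Your geometric lifting is essentially the paper's: fold $\rr^d$ along a hyperplane orthogonal to $v$, embed it as a V-shaped surface in $\rr^{d+1}$, and cut with a ham sandwich hyperplane. The gap is the step you flag as ``a separate but standard technicality'': the continuous selection $v \mapsto H_v$. For each $v$ the $d+1$ pushforward measures on $\Sigma_v$ (or their $\varepsilon$-thickenings) admit in general a finite \emph{set} of ham sandwich hyperplanes in $\rr^{d+1}$, not a single one, and there is no reason a continuous, $\zz_2$-invariant single-valued choice exists as $v$ ranges over $S^{d-1}$. Perturbing the original $\mu_i$ does not help: the $\tilde\mu_i^v$ are singular for every $v$, and thickening them to absolutely continuous measures still leaves the halving hyperplane non-unique for some $v$. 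Without this selection you have no well-defined section $\beta$ of $E \to \mathbb{RP}^{d-1}$, so the Stiefel--Whitney computation, while correct in itself, has nothing to act on.

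The paper sidesteps exactly this difficulty by enlarging the configuration space. Rather than choosing a ham sandwich hyperplane for each fold direction, it parametrizes the pair (fold direction, normal of the cutting hyperplane in $\rr^{d+1}$) on $S^d \times V_{d-1}(\rr^d)$, fixes the translate of the cut so as to halve the last measure, and records the remaining $d$ halving defects together with the $d-1$ parallelism conditions as a $(\zz_2)^d$-equivariant map into $\rr^d \times \rr^{d-1}\times \cdots \times \rr^1$. The halving condition thus becomes part of the test map rather than a selection made in advance, and the relevant Borsuk--Ulam statement is the paper's \cref{thm:new-topological-result}. A secondary point: the paper places the fold not at $v^\perp$ but at the translate supplied by \cref{lem:separating-fixed-direction}, which guarantees that the cutting hyperplane never coincides with a face of the V-surface; your fold through the origin lacks this property, and in that degenerate case the general-measure ham sandwich theorem only yields inequalities, not exact halving.
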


If there is a hyperplane $H_1$ that halves all measures, we can consider $H_2$ to be at infinity.  If $H_1$ and $H_2$ are not required to be parallel, we present with \cref{thm:simple-wedge} a simple proof that \textit{for any $d+1$ finite measures in $\rr^d$ there exist two closed half-spaces whose intersection contains exactly half of each measure}.  The intersection of two half-spaces is called a wedge.  The fact that $d+1$ measures in $\rr^d$ can be halved by a wedge was first proved by B\'ar\'any and Matou\v{s}ek in dimension two \cite{Barany:2001fs} and later generalized to $\rr^d$ by Schnider \cite{Schnider:2019ua}.  For $\rr^2$, Bereg presented algorithmic approaches for the discrete version which show that more conditions can be imposed to the wedge \cite{Bereg:2005voa}.

The proof requires a new Borsuk--Ulam type theorem about direct products of spheres and Stiefel manifolds, \cref{thm:new-topological-result}, which we describe in \cref{sec:polyhedral-lift}.  As a corollary, we combine \cref{thm:parallel-hyperplanes} with known lifting techniques.  We nickname the following result as the ``bagel ham sandwich theorem'', due to how it looks in $\rr^2$.

\begin{corollary}[Bagel ham sandwich theorem]\label{cor:Bagel}
Let $d$ be a positive integer and $\mu_1, \ldots, \mu_{d+2}$ be $d+2$ finite measures in $\rr^d$, each absolutely continuous with respect to the Lebesgue measure.  Then, we can find two concentric spheres in $\rr^d$ so that the closed region between them has exactly half of each measure.
\end{corollary}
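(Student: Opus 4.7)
The plan is to reduce to \cref{thm:parallel-hyperplanes} via the \emph{paraboloid lift} $\phi \colon \rr^d \to \rr^{d+1}$, $\phi(x) = (x, \lvert x \rvert^2)$, onto the paraboloid $P := \phi(\rr^d)$. The key geometric observation is that a non-vertical hyperplane $H \subset \rr^{d+1}$ with equation $v \cdot x + b\, x_{d+1} = c$ (where $b \neq 0$) pulls back under $\phi$ to the sphere $\{x \in \rr^d : \lvert x + v/(2b)\rvert^2 = c/b + \lvert v\rvert^2/(4b^2)\}$. Consequently, two parallel non-vertical hyperplanes in $\rr^{d+1}$ (same $v, b$, different offsets) correspond under $\phi^{-1}$ to two \emph{concentric} spheres in $\rr^d$, and the slab between them pulls back to the closed annular region between those spheres. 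Since $d+2 = (d+1)+1$, this is exactly the number of measures for which \cref{thm:parallel-hyperplanes} in $\rr^{d+1}$ yields two parallel halving hyperplanes.

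To apply the theorem, push each $\mu_i$ forward to $P$; as $\phi_\ast \mu_i$ is singular in $\rr^{d+1}$, I would approximate by thickening. For small $\epsilon > 0$, let $\tilde\mu_i^\epsilon$ be the absolutely continuous measure on $\rr^{d+1}$ with density $\tfrac{1}{\epsilon} f_i(x)$ on $\{(x,t) : \lvert x\rvert^2 \le t \le \lvert x\rvert^2 + \epsilon\}$ and zero elsewhere, where $f_i$ denotes the Lebesgue density of $\mu_i$. Applying \cref{thm:parallel-hyperplanes} to $\tilde\mu_1^\epsilon, \ldots, \tilde\mu_{d+2}^\epsilon$ produces parallel hyperplanes $H_1^\epsilon, H_2^\epsilon$ whose slab halves each $\tilde\mu_i^\epsilon$. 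Parameterizing the pair by its unit common normal $(v^\epsilon, b^\epsilon) \in S^d \subset \rr^{d+1}$ and two offsets, by compactness a subsequence $\epsilon_k \to 0$ yields a limiting pair $(H_1, H_2)$. Absolute continuity of each $\mu_i$ guarantees that $\phi_\ast \mu_i$ charges no hyperplane slice of $P$, so the limiting slab halves each $\phi_\ast \mu_i$ exactly; pulling back through $\phi$ produces the claimed concentric spheres.

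The main obstacle will be the degenerate limit in which $(v^{\epsilon_k}, b^{\epsilon_k})$ approaches a horizontal direction with $b = 0$: the limiting hyperplanes are then vertical in $\rr^{d+1}$ and pull back to two parallel hyperplanes in $\rr^d$ rather than concentric spheres. To exclude this I would first perturb the $\mu_i$ (say, by adding a small point-like mass near the origin to one of them) so that no pair of parallel \emph{vertical} hyperplanes in $\rr^{d+1}$ halves every perturbed measure; a pair of parallel hyperplanes in $\rr^d$ has only $d+1$ parameters against $d+2$ halving conditions, so this is a generic condition and an arbitrarily small perturbation suffices. This pins $b^{\epsilon_k}$ away from zero, giving concentric spheres for each perturbation, and a final limit as the perturbation vanishes gives the full statement. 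A minor additional subtlety is that $c/b + \lvert v\rvert^2/(4b^2)$ could be negative for the inner hyperplane, meaning the inner sphere has imaginary radius; this is harmless, as it simply says the halving region is a ball --- an admissible degenerate concentric pair.
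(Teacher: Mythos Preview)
Your approach is essentially the same as the paper's: lift $\rr^d$ to the paraboloid $\mathcal{P}\subset\rr^{d+1}$ and apply \cref{thm:parallel-hyperplanes} to the $d+2$ lifted measures, then observe that parallel hyperplane sections of $\mathcal{P}$ project to concentric spheres. The paper's proof is three sentences; it notes that every hyperplane in $\rr^{d+1}$ has zero mass for each pushed-forward measure and invokes \cref{thm:parallel-hyperplanes} directly, without writing out the $\varepsilon$-thickening or the vertical-direction analysis. Your version is more scrupulous on two points the paper leaves implicit: (i) the lifted measures are singular, so strictly speaking an approximation (your $\tilde\mu_i^\varepsilon$) and limit are needed---this is exactly the thickening device already used inside the proof of \cref{thm:parallel-hyperplanes}, so the paper is tacitly relying on it; and (ii) the possibility that the limiting normal is vertical ($b=0$), giving parallel hyperplanes in $\rr^d$ rather than genuine spheres. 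On (ii), the paper simply does not comment; the natural reading is that parallel hyperplanes count as concentric spheres with center at infinity, in which case your perturbation step is unnecessary. If you insist on literal spheres, your genericity sketch is plausible but would need more care to be airtight.
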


\cref{thm:parallel-hyperplanes} is optimal, as the region between the two hyperplanes is convex.  One can simply take $d+1$ measures concentrated each around a vertex of a simplex and a final measure concentrated around the barycenter of the simplex to show that the result is impossible with $d+2$ measures.  The problem of cutting the same fraction for a family of measures with a single convex set has been studied before \cites{Akopyan:2013jt, Blagojevic:2007ij}, which we revisit in \cref{sec:remarks}.  \cref{thm:parallel-hyperplanes} is also related to the problem of halving measures in $\rr^d$ using hyperplane arrangements.  Langerman conjectured that \textit{any $dn$ measures in $\rr^d$ can be simultaneously halved by a chessboard coloring induced by $n$ hyperplanes} \cites{Barba:2019to, Hubard2020}.  For $n=2$, this has been confirmed for $2d-O(\log d)$ measures \cite{Blagojevic2018}.  If the hyperplanes are required to be parallel, this reduces the dimension of the space of possible partitions from $2d$ to $d+1$, matching the number of measures in \cref{thm:parallel-hyperplanes}.

General mass partition results like the ham sandwich theorem can halve many measures simultaneously.  If we want to cut a fixed (but possibly different) fraction of each measure, conditions need to be imposed.  For example, if two measures coincide, it is impossible to find a half-space that contains exactly half of one and one third of the other.

The first result with arbitrary sizes for each measure was proved by Hugo Steinhaus in dimensions two and three \cite{Steinhaus1945}.  He required the support of the measures to be well separated, meaning that the supports of any set of measures could be separated from the supports of the rest by a hyperplane.  This condition was sufficient to guarantee the existence of a half-space cutting a fixed fraction of several measures.  This result was rediscovered and extended to high dimensions independently by B\'ar\'any, Hubard, and Jer\'onimo and by Breuer \cites{Barany:2008vv, Breuer2010}.

\begin{theorem}[B\'ar\'any, Hubard, Jer\'onimo 2008; Breuer 2010]\label{thm:BHJ}
Let $d$ be a positive integer and $\mu_1, \ldots, \mu_d$ be $d$ finite measures in $\rr^d$, each absolutely continuous with respect to the Lebesgue measure so that their supports $K_1, \ldots, K_d$ are well separated.  Let $\alpha_1, \ldots, \alpha_d$ be real numbers in $(0,1)$.  Then, there exists a half-space $H$ so that
\[
\mu_i(H) = \alpha_i \cdot \mu_i (\rr^d) \qquad \mbox{for }i=1,\ldots, d.
\]
\end{theorem}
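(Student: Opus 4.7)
The plan is to parametrize oriented half-spaces by their outward unit normal $v\in S^{d-1}$ and reduce the problem to producing a single direction at which all $d$ individual cut-offsets coincide. For each $v\in S^{d-1}$ and each $i$, absolute continuity of $\mu_i$ yields a unique $t_i(v)\in\rr$ with $\mu_i(\{x\in\rr^d:\langle x, v\rangle \ge t_i(v)\})=\alpha_i\mu_i(\rr^d)$, and $v\mapsto t_i(v)$ is continuous. The theorem is equivalent to exhibiting $v^*\in S^{d-1}$ at which $t_1(v^*)=\cdots=t_d(v^*)$, in which case the common hyperplane $\{x:\langle x,v^*\rangle=t_1(v^*)\}$ bounds the desired half-space.

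To produce such a direction, well-separatedness supplies distinguished vectors: for each $i$ a unit vector $n_i$ normal to a hyperplane separating $K_i$ from the remaining supports (with $K_i$ on the smaller-inner-product side), so that $t_i(n_i)<t_j(n_i)$ for every $j\ne i$. More generally, the full separability gives, for every nonempty $S\subsetneq\{1,\ldots,d\}$, a unit normal $n_S$ with $t_i(n_S)<t_j(n_S)$ whenever $i\in S$ and $j\notin S$. I would use these directions to form a $(d-1)$-simplex $\Delta\subset S^{d-1}$ (obtained by radial projection of a linear $(d-1)$-simplex whose vertices are the $n_{\{i\}}$) and define, for each $i$, the closed set
\[
A_i=\{v\in\Delta:t_i(v)\le t_j(v)\text{ for all }j\}.
\]
The sets $A_i$ cover $\Delta$ since at every $v$ some index attains the minimum.

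The central step is to apply the Knaster--Kuratowski--Mazurkiewicz lemma: verify that, for every nonempty $S\subseteq\{1,\ldots,d\}$, the face of $\Delta$ spanned by $\{n_{\{i\}}:i\in S\}$ is contained in $\bigcup_{i\in S}A_i$. Well-separatedness, used via the separating normals $n_S$, enables this check because along such a face no index outside $S$ can produce a strictly smaller value of $t_j(v)$ than every index in $S$. KKM then yields $v^*\in\bigcap_{i=1}^{d}A_i$, i.e., a direction at which $t_1(v^*)\le t_2(v^*)\le\cdots$ and symmetrically, forcing all $t_i(v^*)$ to agree.

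The main obstacle will be verifying the KKM boundary condition rigorously. Converting the combinatorial data (a separating hyperplane for every subset of the $K_i$) into a pointwise statement about which indices can minimise $t_i(v)$ along a face of $\Delta$ is the core of the argument; it uses the full strength of well-separatedness rather than just pairwise separability, and probably requires the vertices of $\Delta$ to be chosen more carefully than a naive $n_{\{1\}},\ldots,n_{\{d\}}$, so that the face structure of $\Delta$ matches the combinatorial lattice of separating directions $n_S$. An alternative route is induction on $d$, isolating one measure using a separating hyperplane and applying the $(d-1)$-dimensional statement inside the complementary halfspace while tracking a one-parameter family of candidate hyperplanes; the inductive step still reduces to a topological continuity argument of comparable subtlety.
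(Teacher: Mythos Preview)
Your reformulation—seek a direction $v^*\in S^{d-1}$ at which the offsets $t_1(v^*),\ldots,t_d(v^*)$ coincide—is sound, but the KKM step you flag as the ``main obstacle'' is a genuine gap, not a detail to be cleaned up. The difficulty is structural: well-separatedness hands you one separating hyperplane for each of the $2^d$ subsets of $[d]$, i.e.\ cubical combinatorics, while KKM is a simplicial tool. There is no reason the geodesic arc from $n_{\{1\}}$ to $n_{\{2\}}$ on $S^{d-1}$ should pass anywhere near $n_{\{1,2\}}$, and concrete configurations (take $K_1,K_2$ on roughly opposite sides of the remaining supports) make $n_{\{1\}}$ and $n_{\{2\}}$ nearly antipodal, so your spherical simplex $\Delta$ can degenerate or have faces on which an index $j\notin S$ does minimise $t_j$. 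Your proposed remedy (``choose the vertices more carefully so that the face structure of $\Delta$ matches the lattice of $n_S$'') does not come with a candidate choice, and I do not see one: a $(d-1)$-simplex simply does not carry $2^d$ distinguished directions in the right incidence pattern.

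The paper sidesteps this by matching the combinatorics from the start. It works over the hypercube $Q=[0,1]^d$ rather than a simplex, and rather than unit normals it parametrises non-vertical hyperplanes linearly by their $d$ coefficients in $\rr^d$. To each vertex $v_I$ of $Q$ it assigns the unique hyperplane $H_I$ tangent to every $K_i$ with $K_i$ below iff $i\in I$; this is exactly where the full strength of well-separatedness enters. Extending multilinearly over $Q$ and then composing with $q\mapsto(\mu_1(H(q)),\ldots,\mu_d(H(q)))$ gives a continuous $g:Q\to Q$. The crucial and easy observation is that a point below each of several hyperplanes is also below any hyperplane whose coefficient vector lies in their convex hull; hence $g$ sends every face of $Q$ into itself. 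That forces $g$ to have degree one on $\partial Q$ and therefore to be surjective, so $(\alpha_1,\ldots,\alpha_d)$ is hit. The face-preservation that is the entire obstacle in your simplicial picture falls out in one line from the cubical bookkeeping.
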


The proof of B\'ar\'any, Hubard, and Jer\'onimo uses Brouwer's fixed point theorem.  Breuer's proof uses the Poincar\'e--Miranda theorem, which is equivalent to Brouwer's fixed point theorem but has a significantly different formulation.  Steinhaus' proof is quite different and uses the Jordan curve theorem.  In \cref{sec:well separated} we present a new proof of \cref{thm:BHJ} that uses a degree argument.  We extend \cref{thm:parallel-hyperplanes} in a similar way for well separated measures.

\begin{theorem}\label{thm:parallel-hyperplanes-separated}
Let $d$ be a positive integer and $\mu_1, \ldots, \mu_{d+1}$ be $d+1$ finite measures in $\rr^d$, each absolutely continuous with respect to the Lebesgue measure.  Suppose that the supports $K_1, \ldots, K_{d+1}$ of $\mu_1, \ldots, \mu_{d+1}$ are well separated.  Let $\alpha_1, \ldots, \alpha_{d+1}$ be real numbers in $(0,1)$. Then, there exist two parallel hyperplanes $H_1, H_2$ in $\rr^d$ so that the region $A$ between them satisfies
\[
\mu_i(A) = \alpha_i \cdot \mu_i(\rr^d) \qquad \mbox{for all }i=1,\ldots, d+1.
\]
\end{theorem}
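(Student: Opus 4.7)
The plan is to adapt the degree-theoretic proof of \cref{thm:BHJ} (presented in \cref{sec:well separated}) to the 2-parameter setting of parallel hyperplane strips. A strip in $\rr^d$ is parameterized by a unit normal $u \in S^{d-1}$ and two offsets $t_1 \leq t_2$, giving $A(u, t_1, t_2) = \{x : t_1 \leq \langle x, u \rangle \leq t_2\}$, subject to the identification $(u, t_1, t_2) \sim (-u, -t_2, -t_1)$. This parameter space has dimension $(d-1) + 2 = d+1$, matching the codomain of the constraint map
\[
F(u, t_1, t_2) = \bigl(\mu_i(A) - \alpha_i \mu_i(\rr^d)\bigr)_{i=1}^{d+1}.
\]
The goal is to find a zero of $F$ in this parameter space.

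Since the supports $K_i$ are bounded, I would restrict to $|t_1|, |t_2| \leq R$ for $R$ large enough that all supports lie in the open ball of radius $R$, producing a compact $(d+1)$-manifold-with-boundary $M$. The boundary $\partial M$ has three types of components: (a) degenerate strips with $t_1 = t_2$, where $F_i = -\alpha_i \mu_i(\rr^d) < 0$ for all $i$; (b) ``one-sided'' strips with $t_1 = -R$ or $t_2 = R$, which act as half-spaces with respect to the measures; and (c) the full-coverage strip with $t_1 = -R$, $t_2 = R$, where $F_i = (1-\alpha_i) \mu_i(\rr^d) > 0$ for all $i$. On component (b), the family of strips is essentially the half-space parameter space, and the well-separation hypothesis controls the sign patterns of $F$ on this component exactly as in the degree-theoretic proof of \cref{thm:BHJ}.

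Combining these boundary behaviors, I expect to deduce that $F|_{\partial M}$ has nonzero Brouwer degree around $0 \in \rr^{d+1}$, forcing $F$ to vanish in $\inter M$ and yielding the desired strip. The main obstacle is the degree calculation on the extended boundary: unlike the BHJ half-space parameter space, which compactifies cleanly to a sphere, $M$ carries the additional boundary components (a) and (c), and the antipodal identification $(u, t_1, t_2) \sim (-u, -t_2, -t_1)$ interacts nontrivially with the degenerate locus $\{t_1 = t_2\}$. Making the reduction to the BHJ degree count precise, by tracking how the constant-sign contributions from (a) and (c) patch the BHJ sign patterns on (b), is the technical heart of the argument.
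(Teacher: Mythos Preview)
Your approach is genuinely different from the paper's.  The paper never parameterizes slabs directly.  Instead it lifts $\rr^d$ to the V-shaped polyhedral surface $S(H)\subset\rr^{d+1}$, checks (via Radon) that the lifted measures remain well separated in $\rr^{d+1}$, and applies \cref{thm:BHJ} \emph{there}, with $d+1$ measures in $\rr^{d+1}$, to obtain a unique half-space cutting the prescribed fractions.  The remaining freedom is the direction of the ridge of $S(H)$, which is parameterized by $V_{d-1}(\rr^d)$; the paper then invokes the $(\zz_2)^{d-1}$-equivariant Borsuk--Ulam theorem of Chan--Chen--Frick--Hull (\cref{thm:topo1}) to force the normal of the cutting half-space in $\rr^{d+1}$ to be orthogonal to the ridge, which is exactly the condition making the trace on $S(H)$ project to a slab in $\rr^d$.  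The lifting buys two things you do not get for free: BHJ applies verbatim because the measure count matches the ambient dimension, and the \emph{uniqueness} in BHJ supplies continuity of the half-space as a function of the ridge direction, so the equivariant argument runs cleanly with no boundary bookkeeping.

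Your direct degree idea is plausible---for $d=1$ it reduces to a winding-number count around the boundary of a rectangle and works---but the gap you flag is real and farther from the BHJ argument than you suggest.  The BHJ proof in \cref{sec:well separated} relies on the $2^d$ common tangent hyperplanes to $d$ well-separated bodies in $\rr^d$ to build a cube map $Q\to Q$ of degree one.  On your component~(b) you have $d+1$ bodies in $\rr^d$ against a $d$-dimensional half-space family; by a dimension count there are generically \emph{no} common tangent hyperplanes, so there is no cube to import and no ready-made ``BHJ sign patterns'' to invoke.  Computing the degree of $F|_{\partial M}\colon\partial M\to S^{d}$ would require a new combinatorial description of the sign stratification on~(b), glued correctly to the constant-value pieces~(a) and~(c) across the corners of the triangle and across the $\zz_2$ identification.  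Note also that the involution $(u,t_1,t_2)\mapsto(-u,-t_2,-t_1)$ is orientation-reversing on $S^{d-1}\times T$ exactly when $d$ is even (the antipode on $S^{d-1}$ has degree $(-1)^d$ and the swap-negate on $T$ has determinant $-1$), so for even $d$ the quotient $M$ is non-orientable and you must either work with $\zz_2$-degree or stay on the double cover and halve at the end.
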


We also combine the results with partitions with few hyperplanes and those of partitions using a single convex.  We exhibit conditions for measures in $\rr^d$ that guarantee the existence of a (possibly unbounded) convex polyhedron of few facets which contains a fixed fraction of each measure or the existence of a convex polytope with few vertices that contains a fixed fraction of each measure.  This is done in \cref{sec:polyhedral}.  These results work with an arbitrary number of measures in $\rr^d$.

Finally, we revisit a mass partition result by Akopyan and Karasev that uses a lifting argument in its proof.  Akopyan and Karasev proved that for any positive integer $n$ and any $d+1$ measures in $\rr^d$, there exists a convex set $K$ whose measure is exactly $1/n$ of each measure.  We extend the methods from \cref{sec:polyhedral-lift} to bound the complexity of $K$ by writing it as the intersection of few half-spaces.

\begin{theorem}\label{thm:same-fraction}
Let $n,d $ be positive integers and $\mu_1, \ldots, \mu_{d+1}$ be $d+1$ finite measures in $\rr^d$, each absolutely continuous with respect to the Lebesgue measure.  There exists a convex set $K$, such that $K$ is the intersection of $\sum_{j=1}^r k_j(p_j-1)p_j$ half-spaces and
\[
\mu_i(K) = \frac{1}{n}\mu_i(\rr^d) \qquad \mbox{for all }i=1,\ldots, d+1,
\]
where $n=p_1^{k_1}\ldots p_r^{k_r}$ is the prime factorization of $n$.
\end{theorem}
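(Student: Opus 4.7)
The plan is to reduce the statement to a single prime by induction on $\Omega(n)$, the number of prime factors of $n$ counted with multiplicity, and to establish the prime case by a polyhedral lifting argument of the kind introduced in \cref{sec:polyhedral-lift}. For the inductive step, write $n=pm$ with $p$ a prime factor of $n$. Granting a key lemma that produces, for a prime $q$ and any $d+1$ absolutely continuous measures in $\rr^d$, a convex set that is the intersection of $q(q-1)$ half-spaces and contains exactly a $\frac{1}{q}$ fraction of each measure, apply it with $q=p$ to obtain such a $K_0$. The restrictions $\mu_i|_{K_0}$ remain absolutely continuous with respect to Lebesgue measure, so the inductive hypothesis applied to $m$ yields a convex $K_1$ that is the intersection of the remaining half-spaces and satisfies $\mu_i|_{K_0}(K_1)=\frac{1}{m}\mu_i(K_0)$; setting $K=K_0\cap K_1$ finishes the step.

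To prove the key lemma for a prime $p$, I would take a convex piecewise-linear function $f(x)=\max_{1\le j\le p}(b_j\cdot x+e_j)$ with exactly $p$ affine pieces and lift each $\mu_i$ via $x\mapsto(x,f(x))$ to a measure $\tilde\mu_i$ supported on the graph $S\subset\rr^{d+1}$. Next, apply the convex equipartition theorem for $d+1$ measures in $\rr^{d+1}$ at a prime $p$ (Karasev; Hubard--Karasev; Blagojevi\'c--Ziegler; Sober\'on) to obtain $p$ sites $z_1,\dots,z_p$, weights $w_1,\dots,w_p$, and the associated power-diagram partition $V_1,\dots,V_p$ of $\rr^{d+1}$ into $p$ convex cells with $\tilde\mu_i(V_j)=\frac{1}{p}\tilde\mu_i(\rr^{d+1})$ for all $i$ and $j$. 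After relabeling (and an arbitrarily small generic perturbation to break ties in the last coordinate), the site $z_1$ has the strictly smallest $(d+1)$-th coordinate, so each of the $p-1$ bounding half-spaces of $V_1$ has outward normal $z_k-z_1$ with a strictly positive $(d+1)$-th entry. Such a constraint pulled back via $x\mapsto(x,f(x))$ takes the form $f(x)\le\ell_k(x)$ for an affine $\ell_k$, which, because $f$ is the pointwise maximum of $p$ affine pieces, is equivalent to an intersection of $p$ half-spaces in $\rr^d$. Hence $K=\{x\in\rr^d:(x,f(x))\in V_1\}$ is convex, is the intersection of at most $(p-1)\cdot p=p(p-1)$ half-spaces, and satisfies $\mu_i(K)=\tilde\mu_i(V_1)=\frac{1}{p}\mu_i(\rr^d)$.

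The principal obstacle is applying the convex equipartition theorem to the lifted measures $\tilde\mu_i$, which are concentrated on the $d$-dimensional surface $S\subset\rr^{d+1}$ and so fail to be absolutely continuous with respect to Lebesgue measure in $\rr^{d+1}$. I would address this either by invoking a version of the equipartition theorem that allows general Borel measures, mirroring the use of the ham sandwich theorem for general measures elsewhere in the paper, or by convolving each $\tilde\mu_i$ with a shrinking mollifier in $\rr^{d+1}$, applying the equipartition in the absolutely continuous case, and passing to a subsequential limit via a compactness argument on the space of power diagrams. The primality of $p$ enters through the $(\zz/p)$-equivariant Borsuk--Ulam-type topology underlying the convex equipartition theorem, which is why the induction must peel off one prime factor at a time rather than treating $n$ in one go.
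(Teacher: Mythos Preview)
Your overall strategy is the same as the paper's: reduce to a prime $p$ by subdivision, lift $\rr^d$ to the graph of a convex piecewise-linear function with $p$ affine pieces, apply the convex equipartition theorem in $\rr^{d+1}$ to obtain a power diagram, and project down the cell whose site has the smallest $(d+1)$-th coordinate. The half-space count $p(p-1)$ is also argued the same way. There is, however, one genuine gap.

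You take an \emph{arbitrary} convex piecewise-linear lift with $p$ pieces. The paper does not: it first proves \cref{lem:separating-fixed-direction-strong}, which (when no equipartition by $p-1$ parallel hyperplanes already exists) produces $p-1$ parallel hyperplanes so that \emph{each} of the resulting $p$ slabs $R_1,\dots,R_p$ carries strictly less than $1/p$ of some $\mu_i$. The lift is built over exactly these slabs. This choice is what makes the limiting argument go through. A version of the equipartition theorem for general Borel measures, like the general ham sandwich theorem, only yields $\tilde\mu_i(V_j)\ge \frac{1}{p}\tilde\mu_i(\rr^{d+1})$ for the closed cells, not equality; equivalently, after mollifying and passing to a subsequential limit you get only these inequalities. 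The upgrade to equality needs the observation that no bounding hyperplane of a cell $V_j$ can coincide with one of the $p$ supporting hyperplanes of the epigraph of $f$. If it did, then the cell on the non-epigraph side would meet the graph $S$ only inside that face, hence project into a single slab $R_l$, giving $\tilde\mu_i(V_j)\le\mu_i(R_l)<\frac{1}{p}\mu_i(\rr^d)$ for some $i$, contradicting the equipartition. Without the slab inequalities from \cref{lem:separating-fixed-direction-strong} you have no way to rule out this coincidence, and your projected set $K$ need not satisfy $\mu_i(K)=\frac{1}{p}\mu_i(\rr^d)$.

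Your ``arbitrarily small generic perturbation to break ties in the last coordinate'' does not rescue this: perturbing the sites after the equipartition destroys the equalities, and perturbing the lift or the measures beforehand still gives no control on coincidences in the limit. (Incidentally, ties in the last coordinate are harmless for the half-space count: a vertical wall between cells pulls back to a single half-space in $\rr^d$, which only improves the bound.) The missing ingredient is precisely \cref{lem:separating-fixed-direction-strong}; once you insert it, your argument becomes the paper's.
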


We conclude in \cref{sec:remarks} with remarks and open problems.

\section{Equipartition with spheres and sine curves}\label{sec:sines-and-spheres}
While the traditional ham sandwich theorem simultaneously halves $d$ measures in $\rr^d$ by a hyperplane, we can simultaneously halve $d+1$ or more measures in $\rr^d$ if we increase the complexity of the cut.  The following theorem is a consequence of Stone and Tukey's polynomial ham sandwich theorem \cite{Stone:1942hu} and was one of Stone and Tukey's first examples of their main results.  It was also proved in dimension two by Hugo Steinhaus in 1945 \cite{Steinhaus1945} using a particular parametrizations of the space of circles in $\rr^2$.  We present a new proof with a stereographic projection.

\begin{theorem}\label{thm:circular-sandwich}
Let $d$ be a positive integer and let $\mu_1, \ldots, \mu_{d+1}$ be $d+1$ finite measures in $\R^d$, each absolutely continuous with respect to the Lebesgue measure.  Then, there exists either a sphere or a hyperplane that simultaneously splits each measure by half.
\end{theorem}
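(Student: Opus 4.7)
The plan is to use the inverse stereographic projection to lift the problem from $\rr^d$ to $\rr^{d+1}$, where it becomes an instance of the classical ham sandwich theorem for $d+1$ measures. Concretely, identify $\rr^d$ with the hyperplane $\{x_{d+1}=0\}\subset \rr^{d+1}$ and let $\sigma\colon \rr^d \to S^d \setminus \{N\}$ denote the inverse stereographic projection from the north pole $N=(0,\ldots,0,1)$. This is a diffeomorphism onto its image.

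For each $i=1,\ldots,d+1$, I would define a finite Borel measure $\tilde\mu_i$ on $\rr^{d+1}$ by pushing forward $\mu_i$ along $\sigma$; that is, $\tilde\mu_i(A)=\mu_i(\sigma^{-1}(A\cap S^d))$ for every Borel set $A\subset \rr^{d+1}$. Each $\tilde\mu_i$ is supported on $S^d$, and since $\sigma$ is a diffeomorphism and $\mu_i$ is absolutely continuous with respect to Lebesgue measure in $\rr^d$, the measure $\tilde\mu_i$ is absolutely continuous with respect to the surface measure on $S^d\setminus\{N\}$ (and assigns no mass to $\{N\}$).

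Apply the ham sandwich theorem in $\rr^{d+1}$ to the $d+1$ measures $\tilde\mu_1,\ldots,\tilde\mu_{d+1}$ to obtain a hyperplane $H\subset \rr^{d+1}$ whose two closed half-spaces each contain at least half of every $\tilde\mu_i$. The intersection $H\cap S^d$ is either empty, a single point (tangency), or a $(d-1)$-dimensional round sphere; in all three cases its surface measure on $S^d$ vanishes, so $\tilde\mu_i(H)=0$ and the halving is exact. If $H\cap S^d$ were empty or a single point, one half-space would contain $S^d$ (up to a measure-zero set), forcing some $\tilde\mu_i$ to be concentrated on a single side, contradicting the equality of halves (assuming the $\mu_i$ are nonzero; the zero case is trivial). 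Thus $H\cap S^d$ is a genuine $(d-1)$-sphere on $S^d$.

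Finally, project back: $\sigma^{-1}(H\cap S^d)$ is a sphere in $\rr^d$ if $N\notin H$, and a hyperplane in $\rr^d$ if $N\in H$ (this is the classical property of stereographic projection that circles on $S^d$ through $N$ correspond to hyperplanes in $\rr^d$, and circles avoiding $N$ correspond to spheres). The two regions it bounds in $\rr^d$ are the $\sigma$-preimages of the two open half-spaces of $H$ intersected with $S^d$, so the measures $\mu_i$ are each halved. The main subtlety, which I expect to be the key step to justify carefully, is the use of the general (Stone--Tukey) ham sandwich theorem for measures that are singular with respect to Lebesgue measure in $\rr^{d+1}$, together with the verification that $H$ genuinely crosses $S^d$; the rest of the argument is then a clean translation between the sphere and the projected picture.
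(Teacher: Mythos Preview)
Your proposal is correct and follows essentially the same approach as the paper: lift $\rr^d$ conformally onto a sphere in $\rr^{d+1}$, apply the ham sandwich theorem there, and use the fact that hyperplane sections of the sphere pull back to spheres or hyperplanes in $\rr^d$. The only cosmetic difference is that the paper realizes the stereographic projection via an inversion at the origin (after embedding $\rr^d$ at height $1$), so the exceptional ``hyperplane'' case corresponds to the halving hyperplane passing through the origin rather than through the north pole; your treatment of the degenerate cases and of the singular measures on $S^d$ is, if anything, slightly more explicit than the paper's.
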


\begin{proof}
We first embed $\rr^d$ to $\rr^{d+1}$ by appending a coorinate $1$ to each point, so $x \mapsto (x,1) \in \rr^{d+1}$.  Then, we apply $r: \rr^{d+1}\setminus\{0\} \to \rr^{d+1}\setminus\{0\}$ the inversion centered at $0$ with radius $1$.  This is a transformation that sends spheres containing the origin to hyperplanes and hyperplanes to spheres containing the origin.  Hyperplanes containing the origin are fixed set-wise by the inversion and we consider them as degenerate spheres as well.  Restricted to  the embedding of $\rr^d$, the inversion is a stereographic projection to the sphere $S$ of radius $1/2$ centered at $(0,\ldots,0,1/2)$ by rays through the origin.  We also know that $r \circ r$ is the identity.

When we lift the measures $\mu_1, \ldots, \mu_{d+1}$ to $\rr^{d+1}$ and apply $r$, we get measures $\sigma_1, \ldots, \sigma_{d+1}$ on $S$.  By the ham sandwich theorem in $\rr^{d+1}$, there exists a hyperplane $H$ halving each of $\sigma_1, \ldots, \sigma_{d+1}$.  Since $r(H)$ is a sphere in $\rr^{d+1}$, it intersects the embedding of $\rr^d$ in a $(d-1)$-dimensional sphere halving each of $\mu_1, \ldots, \mu_{d+1}$, as we wanted.  The only exceptional case is if $H$ contains the origin, in which case $r(H) = H$, which gives us an equipartition by a hyperplane in $\rr^d$.
\end{proof}

Similarly, the idea of lifting allows for an visual and intuitive proof of the existence of equipartitions of three sets in $\R^2$ by a sine wave.  By a sine wave of period $\alpha$ we mean the graph of a function of the form $y = r + A \sin (2\pi x / \alpha + s)$, for real numbers $A,r,s$.

\begin{theorem}\label{thm:cylinder}
Let $\alpha > 0$ be a real number.  Given three finite measures $\mu_1, \mu_2, \mu_3$ in $\R^2$, each absolutely continuous with respect to the Lebesgue measure, there exists a sine wave of period $\alpha$ halving each measure.
\end{theorem}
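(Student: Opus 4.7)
The plan is to adapt the stereographic lift used for \cref{thm:circular-sandwich}, replacing the sphere by a cylinder so that sine waves of period $\alpha$ correspond to plane sections. Define $\Phi\colon\rr^2\to\rr^3$ by
\[
\Phi(x,y) \;=\; \bigl(\cos(2\pi x/\alpha),\; \sin(2\pi x/\alpha),\; y\bigr),
\]
so the image of $\Phi$ is the cylinder $C=\{(u,v,y)\in\rr^3:u^2+v^2=1\}$. A non-vertical hyperplane $\{y=au+bv+r\}$ in $\rr^3$ pulls back through $\Phi$ to the graph of $y=a\cos(2\pi x/\alpha)+b\sin(2\pi x/\alpha)+r$, which is a sine wave of period $\alpha$ and amplitude $\sqrt{a^2+b^2}$; conversely, every sine wave of period $\alpha$ arises this way.

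From there I would pushforward $\mu_1,\mu_2,\mu_3$ through $\Phi$ to obtain finite Borel measures $\sigma_1,\sigma_2,\sigma_3$ supported on $C\subset\rr^3$, and apply the Stone--Tukey ham sandwich theorem for general measures in $\rr^3$ to produce a hyperplane $H$ halving each $\sigma_i$. By the definition of pushforward, $\sigma_i(H^+)=\mu_i(\Phi^{-1}(H^+))$, so if $H$ is not vertical then its two open half-spaces pull back exactly to the open regions above and below the corresponding sine wave in $\rr^2$, and that sine wave halves each $\mu_i$.

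The main obstacle will be the degenerate case in which $H=\{\beta u+\gamma v=\delta\}$ is vertical: then $H\cap C$ is at most two vertical lines, and $\Phi^{-1}(H^{\pm})$ is a union of vertical strips in $\rr^2$ periodic in $x$ with period $\alpha$, rather than the epigraph or hypograph of a sine wave. Since the hyperplanes in $\rr^3$ form a $3$-parameter family while the vertical ones form a $2$-parameter subfamily, and the halving condition imposes three scalar equations, non-vertical solutions are generic; I would make this rigorous by perturbing the densities of the $\mu_i$ so that the perturbed ham sandwich solution is non-vertical and then taking a limit of the resulting sine waves, the technical subtlety being to show that the amplitudes $\sqrt{a_n^2+b_n^2}$ remain bounded in the limit. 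A cleaner alternative is to use absolute continuity of $\mu_1$ to solve uniquely for the height $r=r(a,b)$ that makes the sine wave halve $\mu_1$, reducing the problem to finding a zero of a continuous map $\rr^2\to\rr^2$, which can be handled by a degree computation on a large circle in $(a,b)$-space. I expect this non-vertical selection step to be the main technical subtlety of the proof.
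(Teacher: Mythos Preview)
Your core approach---lift $\rr^2$ to a cylinder in $\rr^3$ and apply the ham sandwich theorem---is exactly the paper's proof. The map $\Phi$ differs from the paper's $f$ only by a permutation of coordinates and by keeping $\alpha$ general rather than rescaling to $2\pi$.

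Where you diverge is in the handling of the vertical-plane case. You treat it as an obstacle to be removed, and propose either a perturbation-plus-limit argument or a separate degree computation. The paper does neither: immediately before the proof it declares that ``degenerate sine waves'' (the periodic vertical-strip patterns you describe, viewed as limits of sine waves with amplitude tending to infinity) are admissible solutions. With that convention the vertical case is not an exception at all, and the proof ends the moment the ham sandwich hyperplane is produced.

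Your perturbation route would run into exactly the difficulty you flag: if the limiting ham sandwich plane is vertical, the amplitudes $\sqrt{a_n^2+b_n^2}$ need not stay bounded, and the limit of the sine waves \emph{is} the degenerate pattern. So that argument does not yield a genuine sine wave when one is not available; it just recovers the degenerate solution the paper already accepts. The degree-argument alternative might work to force a non-degenerate solution under additional hypotheses, but it is not needed for the theorem as the paper states and interprets it.
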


  We allow ``degenerate'' sine waves of period $\alpha$.  A degenerate sine wave of period $\alpha$ is formed by taking two vertical lines intersecting the $x$-axis in the interval $[0,\alpha)$ and making a translated copy in each interval of the form $n\alpha + [0,\alpha)$.  This set induces a chessboard coloring of the plane into two regions.  We can think of this as the limit of a sequence of sine waves of period $\alpha$ of increasing ampitude.

\begin{proof}
    We prove the result for $\alpha = 2\pi$, as the two cases are equivalent after a scaling argument.  We wrap $\rr^d$ around the cylinder $C$ in $\rr^3$ with equation $x^2 + z^2 = 1$ with the function.
    
    \begin{align*}
        f: \rr^2 & \to C \\
        (x,y) & \mapsto \left(\cos\left(x\right), y, \sin\left(x\right)\right)
    \end{align*}

    Let $\sigma_1, \sigma_2, \sigma_3$ be the measures that $\mu_1, \mu_2, \mu_3$ induce on $C$ by this lifting, respectively.  We apply the ham sandwich theorem to these three measures in $\rr^3$.  Therefore, we can find a plane $H=\{(x,y,z): ax + by + cz = d\}$ that halves each of $\sigma_1, \sigma_2, \sigma_3$.  When we pull $H \cap C$ back to $\rr^2$, we get the set of points $(x,y)$ that satisfy $a \cos(x) + b y + c\sin(x) = d$.  Since a linear combination of the sine and cosine functions is a sinusoid with the same period but possibly different amplitude and phase shift, we have $a \sin(x) + c \cos (x) = A\sin(x + s)$ for some $A$ and $s$.
    
    \begin{figure}[ht!]
    \centering
    \includegraphics[width=.7\textwidth]{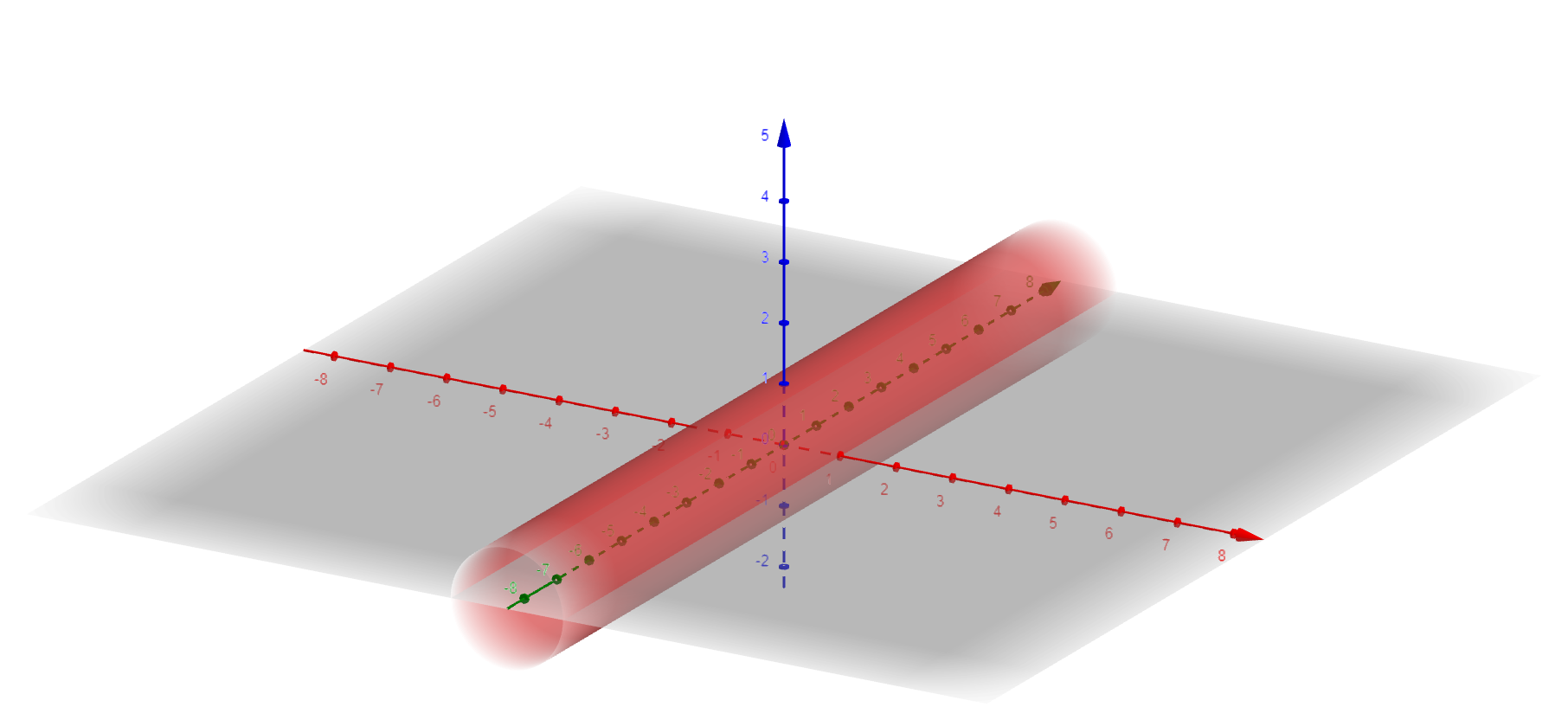}
    \caption{An example of a cylinder $C$ with period $\alpha=2\pi$.  The lift $\rr^2 \to C$ is not an injective function but this does not cause a problem.}
    \label{fig:cylinder}
\end{figure}
\end{proof}

The degenerate cases appear when $b$, the coefficient of $y$, is zero.  One can prove a high-dimensional version of \cref{thm:cylinder} by wrapping $\rr^d$ around $S^{d-1}\times \rr$ to find ``sinusoidal surfaces of fixed period'' that halve $d+1$ measures in $\rr^d$.

\section{Equipartitions with wedges and parallel hyperplanes}\label{sec:polyhedral-lift}

In this section, we prove results regarding equipartitions of $d+1$ mass distributions in $\rr^d$ by a wedge. 
A \textit{wedge} in $\R^d$ is the intersection of two closed half-spaces.  Note that a single closed half-space is also considered a wedge.

We say that a measure $\mu$ with support $K$ in $\rr^d$ is \textit{absolutely continuous} if it is absolutely continuous with respect to the Lebesgue measure, the interior of $K$ is connected and not empty, and for every open set $U \subset K$ we have $\mu(U) > 0$.  This guarantees that there is a unique halving hyperplane in each direction for $\mu$ and that the halving hyperplane varies continuously as we change the direction of the cut.  We first establish a lemma about halving hyperplanes.  We only use the lemma below with $n=d+1$, but it works in general.

\begin{lemma}\label{lem:separating-fixed-direction}
Let $n$ be a positive integer, $\mu_1, \ldots, \mu_{n}$ be finite absolutely continuous measures in $\rr^d$, and $v$ be a unit vector in $\rr^d$.  There either exists a hyperplane $H$ orthogonal to $v$ that halves each of the $n$ measures or there exists a hyperplane $H$ such that its two closed half-spaces satisfy
\begin{align*}
    \mu_i (H^+) & < \frac{1}{2}\mu_i(\rr^d) \qquad \mbox{for some $i \in [n]$ and}\\
    \mu_{i'} (H^-) & < \frac{1}{2}\mu_{i'}(\rr^d) \qquad \mbox{for some $i' \in [n]$.}
\end{align*}
\end{lemma}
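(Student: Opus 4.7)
First I would parameterize hyperplanes orthogonal to $v$: for each $t \in \rr$, put $H_t = \{x \in \rr^d : \langle x, v\rangle = t\}$ with closed half-spaces $H_t^+ = \{x : \langle x, v\rangle \ge t\}$ and $H_t^- = \{x : \langle x, v\rangle \le t\}$. My next step is to verify that $t \mapsto \mu_i(H_t^+)$ is continuous, tends to $\mu_i(\rr^d)$ as $t \to -\infty$ and to $0$ as $t \to +\infty$, and is strictly decreasing on the $t$-interval on which the value lies in $(0, \mu_i(\rr^d))$. Continuity and the limits follow from $\mu_i$ being a finite measure absolutely continuous with respect to Lebesgue measure; strict monotonicity is precisely where the paper's additional definition of ``absolutely continuous measure'' is used, since the interior of $K_i$ is connected and every nonempty open subset of $K_i$ has positive $\mu_i$-measure, so sliding $H_t$ through $K_i$ strictly changes $\mu_i(H_t^+)$. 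I can then conclude that there is a unique $f_i = f_i(v) \in \rr$ with $\mu_i(H_{f_i}^+) = \tfrac{1}{2}\mu_i(\rr^d)$.

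The rest of the argument is a short case split on the $n$ real numbers $f_1(v), \ldots, f_n(v)$. If they all coincide, then $H_{f_1(v)}$ is orthogonal to $v$ and halves every $\mu_i$, which is the first alternative. Otherwise, I choose $i$ and $i'$ realizing $f_i = \min_j f_j < \max_j f_j = f_{i'}$, pick any $t \in (f_i, f_{i'})$, and set $H = H_t$. Strict monotonicity then gives $\mu_i(H^+) < \mu_i(H_{f_i}^+) = \tfrac{1}{2}\mu_i(\rr^d)$ because $t > f_i$, and $\mu_{i'}(H^+) > \tfrac{1}{2}\mu_{i'}(\rr^d)$ because $t < f_{i'}$; combining the latter with $\mu_{i'}(H) = 0$ yields $\mu_{i'}(H^-) < \tfrac{1}{2}\mu_{i'}(\rr^d)$. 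This is exactly the second alternative, and the $H$ produced is in fact orthogonal to $v$, which is stronger than what the statement requires.

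The only real obstacle is the strict monotonicity of $t \mapsto \mu_i(H_t^+)$ on the range where its value lies in $(0,\mu_i(\rr^d))$; this is what the paper's connectedness-of-interior and positivity-on-open-subsets hypotheses are designed to guarantee, so the step is clean once those hypotheses are invoked. After that, the lemma is just a pigeonhole-style comparison of the $n$ real numbers $f_i(v)$, with the first alternative occurring precisely when they all coincide and the second alternative obtained from any intermediate level.
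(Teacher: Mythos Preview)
Your proof is correct and follows essentially the same approach as the paper: find the unique halving hyperplane orthogonal to $v$ for each $\mu_i$, declare the first alternative if they all coincide, and otherwise choose any $H$ strictly between the two extreme halving hyperplanes. The paper simply states this in two sentences, relying on the remark preceding the lemma that the uniqueness of the halving hyperplane is guaranteed by its definition of ``absolutely continuous,'' whereas you spell out the continuity and strict-monotonicity details explicitly.
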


\begin{proof}
For each $i$, let $H_i$ be the halving hyperplane for $\mu_i$ orthogonal to $v$.  If all these hyperplanes coincide we are done.  Otherwise, we can order this set of hyperplanes by the direction $v$ and any hyperplane $H$ strictly between the first $H_i$ and the last $H_{i'}$ satisfies the conditions we want.
\end{proof}

In the situation above, we always take the hyperplane $H'$ exactly half-way between the first $H_i$ and the last $H_{i'}$.  This makes the choice of hyperplanee continuous as $v$ varies, and invariant if we replace $v$ by $-v$.

Through the rest of the manuscript we denote the canonical basis of $\rr^d$ by $e_1, \ldots, e_d$.

\begin{theorem}\label{thm:simple-wedge}
Let $d$ be a positive integer and $\mu_1, \ldots, \mu_{d+1}$ be $d+1$ finite absolutely continuous measures in $\rr^d$.  Then, there exists a wedge that contains exactly half of each measure.
\end{theorem}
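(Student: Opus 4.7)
The plan is to reduce to the ham sandwich theorem in $\rr^{d+1}$ via a piecewise-linear lift of $\rr^d$, in the same spirit as the stereographic and cylindrical lifts of the previous section. I would lift via
\[
f : \rr^d \to \rr^{d+1}, \qquad f(x_1,\ldots,x_d) = (x_1,\ldots,x_d, |x_d|),
\]
whose image is a ``V-shaped'' polyhedral surface $S$ with a fold along the coordinate hyperplane $\{x_d = 0\}$. For each $i$, I would let $\sigma_i$ be the pushforward of $\mu_i$ under $f$, obtaining $d+1$ finite Borel measures in $\rr^{d+1}$ supported on $S$.

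Next I would apply the general-measure ham sandwich theorem in $\rr^{d+1}$ to $\sigma_1,\ldots,\sigma_{d+1}$, obtaining a hyperplane $H = \{(y,y_{d+1}) : a\cdot y + b\,y_{d+1} = c\}$ whose closed half-spaces $H^+, H^-$ each carry at least half of every $\sigma_i$. The crucial observation is that
\[
f^{-1}(H) = \{x \in \rr^d : a\cdot x + b|x_d| = c\}
\]
is contained in the union of the two affine hyperplanes $\{(a_1,\ldots,a_{d-1},a_d+b)\cdot x = c\}$ and $\{(a_1,\ldots,a_{d-1},a_d-b)\cdot x = c\}$. Since each $\mu_i$ is absolutely continuous with respect to Lebesgue measure, this forces $\sigma_i(H) = \mu_i(f^{-1}(H)) = 0$, which upgrades the weak ham sandwich conclusion to \emph{exact} halving: $\sigma_i(H^+) = \sigma_i(H^-) = \mu_i(\rr^d)/2$.

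Finally, I would identify a half-space of $H$ whose preimage is a wedge. Swapping $H^+$ with $H^-$ if necessary, I may assume $b \geq 0$. Using the identity $|x_d| = \max(x_d, -x_d)$,
\[
f^{-1}(H^-) = \{x : a\cdot x + b|x_d| \leq c\} = \{(a_1,\ldots,a_{d-1},a_d+b)\cdot x \leq c\} \cap \{(a_1,\ldots,a_{d-1},a_d-b)\cdot x \leq c\},
\]
which is the intersection of two closed half-spaces, hence a wedge $W$. By construction $\mu_i(W) = \sigma_i(H^-) = \mu_i(\rr^d)/2$ for every $i$, which is the desired conclusion. The degenerate case $b=0$ collapses $W$ to a single closed half-space, which is admissible under the paper's convention.

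The main subtlety is really the choice of lift: one needs a map $f$ whose pullbacks of hyperplanes are Lebesgue-null in $\rr^d$ (so that exact halving is available) and whose pullback of one side of any hyperplane is always a wedge. The graph of $|x_d|$ accomplishes both simultaneously, because the two slopes $\pm 1$ at the fold produce precisely the two linear inequalities cutting out a wedge, while the $\sigma$-nullity of hyperplanes falls out for free from absolute continuity.
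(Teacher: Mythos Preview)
Your approach is the same as the paper's---lift $\rr^d$ to a V-shaped polyhedral surface in $\rr^{d+1}$, apply the general ham sandwich theorem there, and pull one half-space back to a wedge---but there is a real gap in the step where you claim $\sigma_i(H)=0$.

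You assert that $f^{-1}(H)=\{x:a\cdot x+b|x_d|=c\}$ is always contained in the union of the two affine hyperplanes $\{(a_1,\ldots,a_{d-1},a_d\pm b)\cdot x=c\}$. This fails precisely when one of the vectors $(a_1,\ldots,a_{d-1},a_d\pm b)$ vanishes and $c=0$, i.e.\ when $H$ coincides with one of the two flat pieces of the surface $S$. In that case $f^{-1}(H)$ is an entire closed half-space of $\rr^d$, so $\sigma_i(H)=\mu_i(\{x_d\ge 0\})$ (or $\mu_i(\{x_d\le 0\})$), which need not be zero. And this degenerate case is not hypothetical: if all $d+1$ measures are supported in $\{x_d>0\}$, then every $\sigma_i$ lives on a single $d$-flat in $\rr^{d+1}$, and unless some hyperplane in $\rr^d$ already bisects all $d+1$ measures, the \emph{only} hyperplanes in $\rr^{d+1}$ satisfying the weak ham sandwich conclusion are those containing that flat. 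Your upgrade to exact halving then collapses.

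The paper closes exactly this gap with \cref{lem:separating-fixed-direction}: before lifting, it positions the fold so that each open side of it carries strictly less than half of some $\mu_i$. That forces any ham sandwich hyperplane in $\rr^{d+1}$ to be distinct from both wings of $S(H)$, after which your null-preimage argument and wedge identification go through verbatim. The easy patch to your write-up is to first dispose of the case where $\{x_d=0\}$ (or some translate of it) bisects every $\mu_i$, and otherwise translate the fold so that each side misses half of some measure; then the two wings are ruled out and the rest of your proof is correct.
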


\begin{proof}
Let $v$ be a unit vector in $\rr^d$.  If there is a hyperplane $H$ orthogonal to $v$ halving each measure we are done.  Otherwise, by \cref{lem:separating-fixed-direction}, we can find a hyperplane $H$ such that each side contains less than half of some measure.

Consider the lifting of $\rr^d$ to $\rr^{d+1}$ where we append an additional coordinate to every point $x \in \rr^d$.  Formally, we lift via the map $x  \mapsto (x, \operatorname{dist}(x,H))$.

We denote by $S(H)$ the image of $\rr^{d}$ in this embedding.  Note that the function $x \mapsto \operatorname{dist}(x,H)$ is affine on each side of $H$, so $S(H)$ is contained in the union of two hyperplanes that contain $\{(x,0): x \in H\}$.  See \cref{fig:S(H)} for an illustration of the case $d=2$.

We lift each measure $\mu_i$ in $\rr^d$ to a measure $\sigma_i$ in $\rr^{d+1}$.  The measures $\sigma_1, \ldots, \sigma_{d+1}$ are no longer absolutely continuous.  We now apply the ham sandwich theorem for general measures in $\rr^{d+1}$.  Therefore, we can find a hyperplane $H'$ in $\rr^{d+1}$ so that its two closed half-spaces $(H')^+, (H')^-$ satisfy $\sigma_i((H')^+) \ge \frac{1}{2}\sigma_i(\rr^{d+1})$ and $\sigma_i((H')^-) \ge \frac{1}{2}\sigma_i(\rr^{d+1})$ for all $i =1,\ldots, d+1$.

By construction, each side of $H$ has strictly less than half of one of the measures $\mu_i$.  If the hyperplane $H'$ coincides with one of the two hyperplanes whose union contains $S(H)$, the half-space bounded by $H'$ that contains infinite rays in the direction $-e_{d+1}$ would have less than half the corresponding $\sigma_i$.  Therefore $H'$ is not one of the two hyperplanes forming $S(H)$.

As the two components of $S(H)$ were the only hyperplanes with non-zero measure for each $\sigma_i$, we conclude that $H'$ halves each of the measures in $\rr^{d+1}$.  As a final observation, $H'\cap S(H)$ projects back to $\rr^d$ as the boundary of a wedge that halves all measures.
\end{proof}

\begin{figure}
\centering
  \includegraphics[width=.8\textwidth]{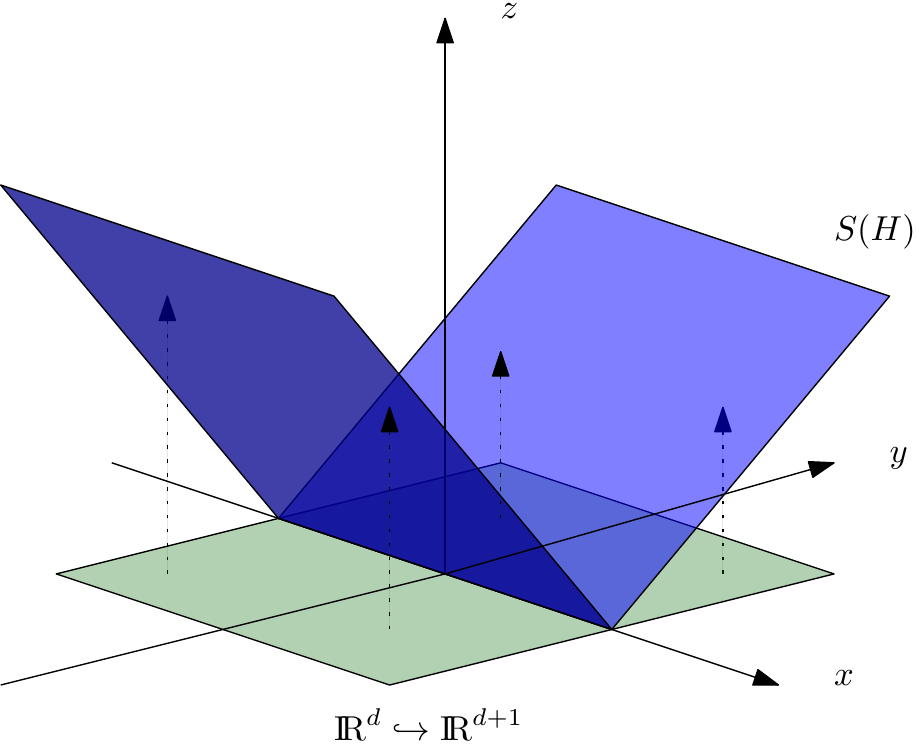}
  \caption{An example of lifting when given three sets of points in $\R^2$. Points on the $xy$-plane are sent to points on this surface. The hyperplane $H$ in this case is the $x$-axis.}
  \label{fig:S(H)}
\end{figure}
    

In the proof of \cref{thm:simple-wedge} we could choose the direction $v$ arbitrarily.  We now use this degree of freedom to strengthen the result.  Even though \cref{thm:parallel-hyperplanes} implies \cref{thm:simple-wedge}, we state it separately as the proof requires more technical tools.  In particular, a simple application of the ham sandwich theorem is insufficient.  We require some additional topological tools in lieu of the Borsuk--Ulam theorem.

Let $V_k(\rr^d)$ be the Stiefel manifold of orthonormal $k$-frames in $\rr^d$.  Formally,
\[
V_k(\rr^d) = \{(v_1,\ldots, v_k) : v_1, \ldots, v_k \in \rr^d \mbox{ are orthonormal}\}.
\]

The space $V_k(\rr^d)$ has a free action of the group $(\zz_2)^{k}$, where we consider $\zz_2 = \{+1,-1\}$ with multiplication.  Given $(v_1,\ldots, v_k) \in V_k(\rr^d)$ and $(\lambda_1, \ldots, \lambda_k) \in (\zz_2)^k$, we define 
\[
(\lambda_1, \ldots, \lambda_k)\cdot (v_1,\ldots, v_k) = (\lambda_1 v_1, \ldots, \lambda_k v_k) \in V_k(\rr^d).
\]

A similar action of $(\zz_2)^k$ can be defined in $\rr^{d-1}\times \ldots \times \rr^{d-k}$, as the direct product of the actions of $\zz_2$ on each $\rr^{d-i}$.  A recent result of Chan, Chen, Frick, and Hull describes properties of $(\zz_2)^k$-equivariant maps between these two spaces.

\begin{theorem}[Chan, Chen, Frick, Hull 2020 \cite{Chan2020}]\label{thm:topo1}
Let $k, d$ be positive integers.  Every continuous $(\zz_2)^k$-equivariant map $f:V_k(\rr^d) \to \rr^{d-1}\times \ldots \times \rr^{d-k}$ has a zero.
\end{theorem}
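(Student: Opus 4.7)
The plan is to prove the theorem by induction on $k$, with Borsuk--Ulam as the base case and an equivariant fiber bundle argument for the inductive step. Write $f = (f_1, \ldots, f_k)$, where each $f_j : V_k(\rr^d) \to \rr^{d-j}$ is $\zz_2$-equivariant with respect to the $j$-th sign change on the domain and the antipodal action on the target, and invariant under the remaining factors.

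For the base case $k = 1$, the domain $V_1(\rr^d) = S^{d-1}$ and the target $\rr^{d-1}$ both carry the antipodal action, so the classical Borsuk--Ulam theorem provides a zero. For the inductive step, I would exploit the $(\zz_2)^k$-equivariant fiber bundle
\[
\pi : V_k(\rr^d) \longrightarrow V_1(\rr^d) = S^{d-1}, \qquad (v_1, \ldots, v_k) \longmapsto v_1,
\]
whose fiber over $v_1$ is canonically identified with $V_{k-1}(v_1^\perp) \cong V_{k-1}(\rr^{d-1})$. The first $\zz_2$ factor acts antipodally on the base while the remaining $(\zz_2)^{k-1}$ acts fiberwise in the standard way. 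Restricting $(f_2, \ldots, f_k)$ to a single fiber yields a $(\zz_2)^{k-1}$-equivariant map $V_{k-1}(\rr^{d-1}) \to \rr^{d-2} \times \ldots \times \rr^{d-k}$ which, by the inductive hypothesis, has a nonempty zero set. Assembling these across all $v_1$ gives a closed, nonempty, first-factor $\zz_2$-invariant subset $Z \subset V_k(\rr^d)$ surjecting onto $S^{d-1}$ on which $f_2, \ldots, f_k$ all vanish.

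The heart of the argument, and the main obstacle, is to force $f_1|_Z$ to vanish as well. My preferred route is a Fadell--Husseini $(\zz_2)^k$-index computation with coefficients in $\mathbb{F}_2[t_1, \ldots, t_k] = H^*(B(\zz_2)^k; \mathbb{F}_2)$. Assuming $f$ is nowhere zero, the target $(\zz_2)^k$-equivariantly deformation retracts onto $\prod_{i=1}^k S^{d-i-1}$, and by monotonicity of the index the product $t_1^{d-1} t_2^{d-2} \cdots t_k^{d-k}$ pulls back to zero in the Borel cohomology $H^*_{(\zz_2)^k}(V_k(\rr^d))$. The task is then to show this pullback is in fact nonzero, contradicting the assumption. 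The Serre spectral sequence of the Borel construction of $\pi$ is the natural tool: the base $S^{d-1}$ furnishes a nonzero $t_1^{d-1}$ and, by induction applied to the fiber, $t_2^{d-2} \cdots t_k^{d-k}$ is nonzero in the Borel cohomology of $V_{k-1}(\rr^{d-1})$. The delicate technical step is verifying that the relevant differentials vanish so that this product survives to the $E_\infty$-page, which is the cohomological heart of Chan--Chen--Frick--Hull's argument and the step where I expect the real difficulty to lie.
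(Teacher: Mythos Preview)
The paper does not prove \cref{thm:topo1}. It is stated with attribution to Chan, Chen, Frick, and Hull \cite{Chan2020} and used as a black box; the paper also remarks that Manta and Sober\'on found an elementary proof \cite{Manta2021}, but neither argument is reproduced here. There is therefore no proof in this paper against which to compare your proposal.

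For what it is worth, the paper does prove the closely related \cref{thm:new-topological-result}, and that proof is a direct Fadell--Husseini index computation: it quotes Fadell and Husseini's explicit description of the generators $f_1,\ldots,f_k$ of $\ind^{(\zz_2)^k}(V_k(\rr^d))$ and checks that the relevant monomial does not lie in the ideal they generate. Your index route is the same strategy specialized to \cref{thm:topo1}, and it goes through cleanly once you cite that computation rather than redoing it via the Serre spectral sequence of $\pi$. One correction: the space $(\rr^{d-1}\times\cdots\times\rr^{d-k})\setminus\{0\}$ is equivariantly homotopy equivalent to the \emph{join} $S^{d-2}*\cdots*S^{d-k-1}$, not to the product $\prod_i S^{d-i-1}$; the index of the join is generated by the single monomial $t_1^{d-1}\cdots t_k^{d-k}$ (which is indeed the claim you go on to use), whereas the index of the product would be generated by the individual powers $t_i^{d-i}$.
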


Manta and Sober\'on recently found an elementary proof of \cref{thm:topo1} \cite{Manta2021}.  We use the result above in \cref{sec:well separated}.  For this section, we need a slight modification.  We use the product of the actions of $\zz_2$ on the $d$-dimensional sphere $S^d$ and of $(\zz_2)^k$ on $V_k(\rr^d)$ to define a free action of $(\zz_2)^{k+1}$ on $S^d \times V_k(\rr^d)$.

\begin{theorem}\label{thm:new-topological-result}
Let $k, d$ be positive integers.  Every continuous $(\zz_2)^{k+1}$-equivariant map $f:S^d \times V_k(\rr^d) \to \rr^d \times \rr^{d-1}\times \ldots \times \rr^{d-k}$ has a zero.
\end{theorem}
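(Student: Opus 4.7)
The plan is to deduce this from \cref{thm:topo1} and the Borsuk--Ulam theorem via an equivariant Euler class computation on the product space $S^d \times V_k(\rr^d)$. Arguing by contradiction, suppose $f$ has no zero; then $f/\|f\|$ is a $(\zz_2)^{k+1}$-equivariant map into the unit sphere of the target representation $W = \rr^d \oplus \rr^{d-1} \oplus \cdots \oplus \rr^{d-k}$. The non-existence of such a map would follow from the non-vanishing of the equivariant Euler class $e(W)$ in $H^*_{(\zz_2)^{k+1}}(S^d \times V_k(\rr^d); \mathds{F}_2)$.

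To compute this, I would first identify $H^*(B(\zz_2)^{k+1}; \mathds{F}_2) \cong \mathds{F}_2[t_0, \ldots, t_k]$, with $t_i$ recording the $i$-th factor of $\zz_2$, so that $e(W) = t_0^d \cdot t_1^{d-1} \cdots t_k^{d-k}$ (since the $i$-th $\zz_2$ acts by negation on the single summand $\rr^{d-i}$). Next I would use that the action on $S^d \times V_k(\rr^d)$ is the direct product of the antipodal $\zz_2$-action on $S^d$ and the $(\zz_2)^k$-action on $V_k(\rr^d)$, so the K\"unneth theorem applied to the Borel construction gives
\[
H^*_{(\zz_2)^{k+1}}(S^d \times V_k(\rr^d); \mathds{F}_2) \cong H^*_{\zz_2}(S^d; \mathds{F}_2) \otimes_{\mathds{F}_2} H^*_{(\zz_2)^k}(V_k(\rr^d); \mathds{F}_2),
\]
under which the Euler class decomposes as $t_0^d \otimes (t_1^{d-1} \cdots t_k^{d-k})$. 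The first factor is non-zero in $H^*_{\zz_2}(S^d; \mathds{F}_2) \cong \mathds{F}_2[t_0]/(t_0^{d+1})$, which is the standard cohomological form of Borsuk--Ulam, and the second factor is non-zero in $H^*_{(\zz_2)^k}(V_k(\rr^d); \mathds{F}_2)$ as the cohomological reformulation of \cref{thm:topo1}. Since a tensor product of non-zero classes over a field is non-zero, $e(W)$ survives and we obtain the required contradiction.

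The main obstacle I expect is the translation between the ``every equivariant map has a zero'' form of \cref{thm:topo1} and the cohomological ``Euler class non-zero'' statement in the second factor; this equivalence is standard equivariant obstruction theory for finite $(\zz_2)^k$-CW complexes like the Stiefel manifold, but demands a careful citation. An alternative, more elementary route that avoids Künneth and Euler classes is fiberwise: for each $x \in S^d$, \cref{thm:topo1} applied to $(f_1, \ldots, f_k)(x, \cdot)$ provides a non-empty $(\zz_2)^k$-invariant zero set $Z_x \subset V_k(\rr^d)$, and the total zero set $Z = \{(x,F) : f_i(x,F) = 0 \text{ for } i \geq 1\}$ is a closed $(\zz_2)^{k+1}$-invariant subspace of $S^d \times V_k(\rr^d)$ surjecting onto $S^d$; the task then reduces to showing that the $\zz_2$-equivariant restriction $f_0|_Z : Z \to \rr^d$ must vanish, via a parametric Borsuk--Ulam argument. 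The hard part of this alternative is verifying that the projection $Z \to S^d$ has odd $\mathds{F}_2$-degree (appropriately interpreted), which is essentially the same content as the cohomological argument above.
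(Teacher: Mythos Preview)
Your approach is essentially the same as the paper's: both argue by contradiction and reduce to showing that the monomial $t_0^d t_1^{d-1}\cdots t_k^{d-k}$ does not vanish in $H^*_{(\zz_2)^{k+1}}(S^d \times V_k(\rr^d);\mathds{F}_2)$.  The paper packages this in the language of the Fadell--Husseini index (the index of the target, generated by that monomial, is not contained in the index of the domain), whereas you phrase it as non-vanishing of the equivariant Euler class and use K\"unneth on the Borel construction to split the question into the two factors.  Since the product action is a genuine product of free actions, the Borel space does split as a product and your K\"unneth step is legitimate.

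The one point of divergence is exactly the ``soft spot'' you flagged: you want the non-vanishing of $t_1^{d-1}\cdots t_k^{d-k}$ on the Stiefel factor to follow from the statement of \cref{thm:topo1}.  That implication (``every equivariant map has a zero'' $\Rightarrow$ ``Euler class nonzero'') is not formal in general; it needs either a mod~$2$ degree argument exhibiting a specific map with an odd number of zero-orbits, or a direct cohomological computation.  The paper avoids this by citing Fadell and Husseini's explicit description of $\ind^{(\zz_2)^k}(V_k(\rr^d))$ (generators $f_i = t_i^{d-i+1} + \text{lower in }t_i$), from which one checks directly that $t_1^{d-1}\cdots t_k^{d-k}$ is not in the ideal.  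If you replace your appeal to \cref{thm:topo1} with that citation, your argument becomes complete and is then a repackaging of the paper's proof.
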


There are several ways to prove the result above.  The dimension of the image and the domain are the same and the action of $(\zz_2)^{k+1}$ is free on $S^d \times V_k(\rr^d)$.  Therefore, \cref{thm:new-topological-result} is a consequence of the general Borsuk--Ulam type results of Musin \cite{Mus12}.  We simply need to find an equivariant function between these two spacse that has an odd number of orbits of zeroes.  Such functions are known for $\zz_2$-equivariant $f_1:S^d \to \rr^d$ and for $(\zz_2)^{k}$-equivariant $f_2: V_k(\rr^d) \to \rr^{d-1}\times \ldots \times \rr^{d-k}$, so we can simply take $f_0= f_1 \times f_2$.  Alternatively, one can use the methods of Chan et al. \cite{Chan2020} to prove \cref{thm:new-topological-result}.  It suffices to note that $S^d \times V_k(\rr^d)$ is a space in which their topological invariants can be applied and the particular function $f_0$ is all that's needed to replace \cite{Chan2020}*{second proof of Lemma 3.2}.  We only use \cref{thm:new-topological-result} for $k=d-1$.  

We present a short proof using the existing computations of the Fadell--Husseini index of these spaces on $\zz_2$ cohomology \cite{Fadell:1988tm}.  Given spaces $X$ and $Y$ with actions of $(\zz_2)^{k+1}$, their indices $\ind^{(\zz_2)^{k+1}}(X)$, $\ind^{(\zz_2)^{k+1}}(Y)$ are ideals in the polynomial ring $\zz_2[t_0,t_1,\ldots,t_k]$.  Moreover, if there exists a continuous $(\zz_2)^{k+1}$-equivariant map $f:X \to Y$, we must have $\ind^{(\zz_2)^{k+1}}(Y) \subset \ind^{(\zz_2)^{k+1}}(X)$.  More details on this index and its computation for spaces and group actions common in discrete geometry can be found on recent work of Blagojevi\'c, L\"uck, and Ziegler \cite{Blagojevic2015}.

\begin{proof}[Proof of \cref{thm:new-topological-result}]
	The result is equivalent to showing that there exists no continuous $(\zz_2)^{k+1}$-equivariant map $f:S^d \times V_k(\rr^d) \to (\rr^d \times \rr^{d-1}\times \ldots \times \rr^{d-k}) \setminus \{0\}$.  The space $(\rr^{d}\times \ldots \times \rr^{d-k}) \setminus \{0\}$ is homotopy equivalent to the join of spheres $S^{d-1}*\ldots * S^{d-k-1}$ and we know $\ind^{(\zz_2)^{k+1}} ( S^{d-1}*\ldots * S^{d-k-1} ) \subset \zz_2 [t_0, t_1,\ldots, t_k]$ is the ideal generated by the single monomial $t_0^dt_1^{d-1}\ldots t_{k}^{d-k}$.
	
	On the other hand, $\ind^{(\zz_2)^{k+1}} (S^d \times V_k(\rr^d))\subset \zz_2[t_0,\ldots, t_k]$ is the ideal generated by the polynomials $t_0^{d+1}, f_1,\ldots, f_k$ where $f_1,\ldots, f_k \subset \zz_2[t_1,\ldots, t_k]$ generate $\ind^{(\zz_2)^k}(V_k(\rr^d))$.  These polynomials are were described completely by Fadell and Husseini \cite{Fadell:1988tm}*{Thm. 3.16}.  Notably
	\[
	f_i = t_i^{n-i+1}+ w_{i,n-i}t_i^{n-i}+ \ldots + w_{i,0},
	\]
	where $w_{i,j} \in \zz_2[t_1,\ldots, t_{i-1}]$ and the degree of $w_{i,j}t_i^j$ is $n-i+1$.  In particular, 
	\[
	t_0^dt_1^{d-1}\ldots t_{k}^{d-k} \not\in \ind^{(\zz_2)^{k+1}}(S^d \times V_k(\rr^d)),
	\]
	which shows that no continuous $(\zz_2)^{k+1}$-equivariant map $f:S^d \times V_k(\rr^d) \to (\rr^d \times \rr^{d-1}\times \ldots \times \rr^{d-k})\setminus\{0\}$ exists.
\end{proof}

The second tool we require is a minor modification of the lift from $\rr^d$ to $\rr^{d+1}$.  In the previous proof, given a hyperplane $H \subset \rr^d$ we lifted $\rr^d$ directly to $S(H) \subset \rr^{d+1}$.  This has the inconvenience that the lift of an absolutely continuous measure is no longer absolutely continuous in $\rr^{d+1}$.  We do not require such a strong condition, but we do require the lifted measures to assign mass zero to any hyperplane.  To avoid this problem, we lift each measure $\mu_i$ to $S(H)^{\varepsilon}$, the region between $S(H)-\varepsilon \cdot e_{d+1}$ and $S(H)+\varepsilon \cdot e_{d+1}$, which we formalize below.

We say that a measure $\mu$ in $\rr^d$ is smooth if it is the integral of a continuous positive function $f:\rr^d \to \rr$ (i.e., $f(A) = \int_A f$ for any measurable set $A$).  We ``lift'' $f$ to a function 
\begin{align*}
    \tilde{f} : \rr^d \times \rr & \to \rr \\
    (x,t) & \mapsto \begin{cases}
    \left(\frac{1}{2\varepsilon}\right)f(x) & \mbox{ if } |\operatorname{dist}(x,H) - t| \le \varepsilon \\
    0 & \mbox{ otherwise.}
    \end{cases}
\end{align*}

We say that the measure $\sigma^{\varepsilon}$ defined as the integral of $\tilde{f}$ in $\rr^d$ is the lift of $\mu$ to $S(H)^{\varepsilon}$.  Notice that as $\varepsilon \to 0$, the measure $\sigma^{\varepsilon}$ converges weakly to the lift of $\mu$ to $S(H)$.  For $\varepsilon>0$, the measure $\sigma^{\varepsilon}$ is not absolutely continuous, but it has value $0$ on each hyperplane.

\begin{proof}[Proof of \cref{thm:parallel-hyperplanes}]

We first assume that no hyperplane simultaneously halves all measures, or we are done.  Since the set of smooth measures is dense in the set of absolutely continuous measures, we may assume without loss of generality that the measures $\mu_1, \ldots, \mu_{d+1}$ are smooth.  Let $\varepsilon>0$.  For $v \in S^d$ and $(v_1, \ldots, v_{d-1}) \in V_{d-1}(\rr^d)$, consider the element $(v,v_1, \ldots, v_{d-1}) \in S^d \times V_{d-1} (\rr^d)$.

Let $H$ be the translate of the hyperplane $T=\operatorname{span}\{v_1,\ldots, v_{d-1}\}$ chosen from \cref{lem:separating-fixed-direction}.  Let $\sigma^{\varepsilon}_1, \ldots, \sigma^{\varepsilon}_{d+1}$ be the lifts of $\mu_1, \ldots, \mu_{d+1}$ to $S(H)^{\varepsilon}$, respectively.

Let $\lambda$ be the value so that the half-spaces 
\begin{align*}
    A & = \{x \in \rr^{d+1}: \langle x, v \rangle \ge \lambda \} \quad \mbox{ and } \\
    B & = \{x \in \rr^{d+1}: \langle x, v \rangle \le \lambda \}
\end{align*}

have the same $\sigma_{d+1}^{\varepsilon}$-measure.  Now we are ready to define a map
\begin{align*}
    f : S^d \times V_{d-1}(\rr^d) & \to \rr^d \times \rr^{d-1} \times \ldots \times \rr^1 \\
(v,v_1,\ldots, v_{d-1}) & \mapsto (x_d,\ldots, x_1)
\end{align*}
where $x_i \in \rr^{i}$ for $i=1,\ldots, d$.  First, we consider
\[
x_d = \begin{bmatrix}
\sigma_1^{\varepsilon}(A)-\sigma_1^{\varepsilon}(B) \\
\vdots \\
\sigma_d^{\varepsilon}(A)-\sigma_d^{\varepsilon}(B)
\end{bmatrix}.
\]
For $i=1,\ldots, d-1$, the first coordinate of $x_i$ is $\langle v, e_{d+1}\rangle \langle v, (v_{d-i},0) \rangle$ and the rest are zero.

This function is continuous by the construction of the lift of the measures.  It is also $(\zz_2)^{k+1}$-equivariant: if we flip the sign of $v$, only $x_d$ changes sign and if we flip the sign of $v_i$ only $x_{d-i}$ changes sign for $i=1,\ldots, d-1$.

By \cref{thm:new-topological-result} the function $f$ has a zero.  The condition $x_d = 0$ tells us that $A$ and $B$ each have exactly half of each $\sigma^{\varepsilon}_i$ for $i=1,\ldots, d+1$.  The conditions of the rest of the $x_i$ being zero vectors means that either $v$ is orthogonal to $e_{d+1}$ or $v$ is orthogonal to each $(v_i,0)$ for $i=1,\ldots, d-1$.

If the first condition happens, then when we project $\rr^{d+1}$ to the hyperplane $e_{d+1} = 0$, each $\sigma^{\varepsilon}_i$ projects to $\mu_i$ and $A$, $B$ project onto two half-spaces of $\rr^d$.  This would mean we have a hyperplane halving each of the original measures, contradicting our initial assumption.  Therefore $v$ is orthogonal to $(v_i,0)$ for all $i$.

We take a sequence of positive real numbers $\varepsilon_{k} \to 0$.  For each of them, we find a zero of the function induced above.  As $S^d \times V_{d-1}(\rr^d)$ is compact, the zeros must have a converging subsequence.  In the limit, we obtain two complementary half-spaces $A, B$ so that each contains at least half of $\sigma_i$ for $i=1,\ldots, d+1$ on $S(H)$ (where the direction of $H$ is determined by the limit of $(v_1, \ldots, v_{d-1})$).  Let $H'$ be the hyperplane at the boundary of $A$ and $B$.

In the limit, the vector $v$ normal to $H'$ is orthogonal to the subspace $T_0=\operatorname{span}\{(v_1,0), \ldots, (v_{d-1},0)\}$.  By the construction of $H$, the hyperplane $H'$ cannot be one of the two hyperplane components of $S(H)$.  The orthogonality mentioned before implies that $H' \cap S(H)$ must be two $(d-1)$-dimensional affine spaces parallel to $T_0$.  When we project back to $\rr^d$, these parallel intersections form $H_1$ and $H_2$ and the region between them has exactly half of each $\mu_i$.
\end{proof}

\begin{proof}[Proof of \cref{cor:Bagel}]
We lift $\rr^d$ to the paraboloid 
\[
\mathcal{P} = \left\{\left(x_1,\ldots, x_d, \sum_{i=1}^d x_i^2\right)\in \rr^{d+1}\right\}.
\]
We now have $d+2$ measures in $\rr^{d+1}$, and every hyperplane has measure zero in each of them.  We can apply \cref{thm:parallel-hyperplanes} and find two parallel hyperplanes $H_1, H_2$ so that the region between them contains exactly half of each measure.  Notice that $H_1 \cap \mathcal{P}$ and $H_2 \cap \mathcal{P}$ project onto concentric spheres in $\rr^d$.
\end{proof}

Of course, similar results can be obtained by applying general Veronese maps as Stone and Tukey did to prove the polynomial ham sandwich theorem \cite{Stone:1942hu}.

\begin{corollary}
Let $d,k$ be positive integers.  For any set of $\binom{d+k}{k}$ finite absolutely continuous measures in $\rr^d$, there exists a polynomial $P$ on $d$ variables and constants $\lambda_1, \lambda_2$ so that $\{x \in \rr^d : \lambda_1 \le P(x) \le \lambda_2\}$ has exactly half of each measure.
\end{corollary}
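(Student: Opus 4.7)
The plan is to mimic Stone and Tukey's proof of the polynomial ham sandwich theorem, with \cref{thm:parallel-hyperplanes} playing the role of the classical ham sandwich theorem. Set $N = \binom{d+k}{k}-1$ and let $V:\rr^d \to \rr^N$ be the Veronese lift sending $x$ to the vector whose entries are all the non-constant monomials in $x_1,\ldots,x_d$ of degree at most $k$. For each $i=1,\ldots,N+1$, push forward $\mu_i$ to a measure $\sigma_i$ on $\rr^N$ defined by $\sigma_i(A) = \mu_i(V^{-1}(A))$.

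The key observation is that every hyperplane in $\rr^N$ has $\sigma_i$-measure zero. Indeed, a hyperplane $\{y\in\rr^N:\sum_\alpha c_\alpha y_\alpha = c\}$ pulls back under $V$ to the algebraic hypersurface $\{x\in\rr^d:\sum_\alpha c_\alpha x^\alpha = c\}$ in $\rr^d$. This polynomial has no constant term and nontrivial coefficients, hence is not identically constant, so its vanishing locus has Lebesgue measure zero. Absolute continuity of $\mu_i$ then forces $\sigma_i$ of the hyperplane to vanish. In particular, the counting $\binom{d+k}{k} = N+1$ matches the number of measures required by \cref{thm:parallel-hyperplanes} in dimension $N$.

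Next I apply \cref{thm:parallel-hyperplanes} to $\sigma_1,\ldots,\sigma_{N+1}$ in $\rr^N$ to obtain two parallel hyperplanes $H_1, H_2$ whose intervening slab contains exactly half of each $\sigma_i$. Writing $H_j = \{y:\sum_\alpha c_\alpha y_\alpha = \lambda_j\}$ with a common normal, set $P(x) = \sum_\alpha c_\alpha x^\alpha$, a polynomial on $\rr^d$ of degree at most $k$. The preimage under $V$ of the slab between $H_1$ and $H_2$ is exactly $\{x\in\rr^d:\lambda_1 \le P(x)\le \lambda_2\}$, which therefore contains exactly half of each original measure $\mu_i$.

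The one subtlety, which I expect to be the main obstacle, is that the lifted measures $\sigma_i$ are supported on the $d$-dimensional Veronese variety and are not absolutely continuous in $\rr^N$, whereas \cref{thm:parallel-hyperplanes} is stated for absolutely continuous measures. As already exploited in the proof of \cref{cor:Bagel}, the $\varepsilon$-smoothing device in the proof of \cref{thm:parallel-hyperplanes} only needs each measure to assign mass zero to every hyperplane, which we verified above. Should a more conservative route be preferred, one can convolve each $\sigma_i$ with a Gaussian of width $\varepsilon$ to obtain genuinely absolutely continuous approximants $\sigma_i^{\varepsilon}$, apply \cref{thm:parallel-hyperplanes} for each $\varepsilon>0$, and extract a convergent subsequence of the resulting pairs $(H_1^{\varepsilon},H_2^{\varepsilon})$ as $\varepsilon\to 0$, using compactness of the parameter space of parallel hyperplane pairs meeting a fixed compact set on which the bulk of the $\sigma_i$ are concentrated.
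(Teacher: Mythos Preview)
Your proposal is correct and follows exactly the approach the paper indicates: lift via the Veronese map to $\rr^{\binom{d+k}{k}-1}$ and apply \cref{thm:parallel-hyperplanes}, just as the Bagel corollary was obtained from the paraboloid lift. You supply more detail than the paper does---in particular the verification that hyperplanes pull back to measure-zero sets and the approximation workaround for the absolute continuity hypothesis---but the underlying argument is the same.
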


The polynomial ham sandwich usually splits $\binom{d+k}{k}-1$ measures using a polynomial of degree at most $k$.  If we want to restrict which monomials are used in the splitting polynomial, we just have to reduce the number of measures accordingly.

\section{Fixed size partitions for well separated measures}\label{sec:well separated}

As noted by B\'ar\'any et al. \cite{Barany:2008vv}, it is known that for well separated convex subsets $K_1, \ldots, K_d$  in $\rr^d$, there are $2^d$ hyperplanes tangent to all of them.  If none of the hyperplanes are vertical (i.e., perpendicular to $e_d$), the tangent hyperplanes are in one-to-one correspondence with the sets of $K_i$ below the hyperplane.  This is was also proved by Klee, Lewis, and Hohenbalken \cite{Klee1997}.  We use this fact in our proof of \cref{thm:BHJ}.

\begin{proof}[Proof of \cref{thm:BHJ}]
Consider the hypercube $Q = [0,1]^d$.  Each vertex of $Q$ can be assigned to a subset of $I \subset [d]$ uniquely.  We denote
\[
v_I = (p_1,\ldots, p_d) \qquad \mbox{where } \quad p_i = \begin{cases}
	1 & \mbox{ if } i\in I \\
	0 & \mbox{ if } i\not\in I.
\end{cases}
\]

For a point $q = (q_1, \ldots, q_d) \in Q$ and $I \subset [d]$ we consider the coefficients
\begin{align*}
\lambda_I(q) = \prod_{i \in I}q_i \prod_{i \not\in I} (1-q_i).
\end{align*}

The coefficients $\lambda_I(q)$ are the coefficients of a convex combination, as they are non-negative and their sum is $1$.  Suppose we have a function $f: \{0,1\}^d \to \rr^d$.  We can extend it to a function $\tilde{f}:Q \to \rr^d$ by mapping
\[
q \mapsto \sum_{I \subset [d]} \lambda_I(q) f(v_I).
\]

Notice that if $\sigma$ is a face of $Q$, then $\tilde{f}(\sigma) \subset \conv(\{f(v_I): v_I \in \sigma\})$.  In particular, $\tilde{f}(v_I) = f(v_I)$.

Now, suppose we are given $d$ well separated convex sets $K_1, \ldots, K_d$ in $\rr^d$ and measures $\mu_1, \ldots, \mu_d$ so that the support of $\mu_i$ is $K_i$.  We may assume without loss of generality that there is no vertical hyperplane tangent to each of $K_1, \ldots, K_d$.

Each non-vertical hyperplane $H$ can be written as
\[
\{(x_1, \ldots, x_d) : x_d = \alpha_1 x_1 + \ldots + \alpha_{d-1} x_{d-1} + \alpha_d \}
\]
for some constants $\alpha_1, \ldots, \alpha_d$.  We assign the vector $r(H)=(\alpha_1, \ldots, \alpha_d)$ to the hyperplane $H$.  We say that a point is above $H$ if $x_d \ge \alpha_1 x_1 + \ldots + \alpha_{d-1} x_{d-1} + \alpha_d$ and below $H$ if $x_d \le \alpha_1 x_1 + \ldots + \alpha_{d-1} x_{d-1} + \alpha_d$.  Notice that if a point $x$ is below a set of hyperplanes $H_1, \ldots, H_k$, then it is also below the hyperplane $r^{-1}(y)$ for any $y \in \conv (r(H_1), \ldots, r(H_k))$.

We know that for each subset $I \subset [d]$, there is a unique hyperplane $H_I$ that is tangent to each $K_i$ and so that $K_i$ is below $H$ if and only if $i \in I$.  This defines a function $f:\{0,1\}^d \to \rr^d$ by simply taking $f(v_I) = r(H_I)$.  We extend $f$ to a function $\tilde{f}:Q \to \rr^d$ as described above.  For $q \in Q$, let $H(q)$ be the set of points below $r^{-1}(\tilde{f}(q))$.

We define a final function
\begin{align*}
	g: Q & \to Q \\
	q & \mapsto (\mu_1 (H(q)), \ldots, \mu_d (H(q)))
\end{align*}

The function $g$ is continuous.  Notice, for example, that $g(v_I) = v_I$ for each $I \subset [d]$.  Moreover, using the properties of $\tilde{f}$, we have that for every face $\sigma \subset Q$, we have $g(\sigma) \subset \sigma$.  This means that $g$ is of degree one on the boundary, so it is surjective.  In particular, there is a point $q_0 \in Q$ such that $g(q_0) = (\alpha_1, \ldots, \alpha_d)$.  Therefore, the hyperplane $H(q_0)$ is the hyperplane we were looking for.
\end{proof}

B\'ar\'any, Hubard, and Karasev also showed that under simple conditions the half-space $H$ from \cref{thm:BHJ} is unique. It suffices that
\begin{itemize}
    \item each measure $\mu_i$ assigns a positive value to each open set in its support $K_i$,
    \item the interior of each $K_i$ is connected and not empty,
    \item no vertical hyperplane is tangent to $K_1, \ldots, K_d$, and
    \item the half-space $H$ contains infinite rays in direction $-e_d$.
\end{itemize}

\begin{proof}[Proof of \cref{thm:parallel-hyperplanes-separated}]
We follow a process similar to the proof of \cref{thm:parallel-hyperplanes}.  First we need an additional observation about our construction of $S(H)$.  In $\rr^d$, there is no hyperplane $H$ intersecting each of $K_1, \ldots, K_{d+1}$.  Otherwise, we can take a point $p_i \in K_i \cap H$.  This gives us $d+1$ points in a $(d-1)$-dimensional space, so by Radon's lemma we can find a partition of them into two subsets $A, B$ whose convex hulls intersect.  This implies that $\{K_i : p_i \in A\}$ cannot be separated from $\{K_i : p_i \in B\}$, contradicting the hypothesis.

Therefore, when we are given an vector $v \in \rr^d$ and we construct $S(H)$ for $H \perp v$, each side of $H$ must have measure zero for some $\mu_i$.  This is much stronger than simply having less than half of some measure.

The main idea will be to lift each measure to a surface $S(H)$ for an appropriate $H$ and use \cref{thm:BHJ}.  We show that by choosing $H$ carefully, we can deduce the existence of the two parallel hyperplanes we seek.

Consider the Stiefel manifold $V_{d-1}(\rr^d)$.  Given $(v_1, \ldots, v_{d-1}) \in V_{d-1}(\rr^d)$, we lift $\rr^d$ to $S(H) \subset \rr^{d+1}$ as in \cref{lem:separating-fixed-direction} where $H$ is parallel to $\operatorname{span}\{v_1,\ldots, v_{d-1}\}$.  This lifts each measure $\mu_i$ in $\rr^d$ to a measure $\sigma_i$ in $S(H)$.  Every hyperplane in $\rr^d$ separating the support vectors of two sets of the measures $\mu_{i}$ can be extended vertically in $\rr^{d+1}$ to separate the corresponding measures $\sigma_i$.  A small tilting can ensure that the separating hyperplane is not vertical.  Therefore, the measures $\sigma_i$ are well separated.

The measures $\sigma_i$ do not satisfy the requirements of \cref{thm:BHJ}, so an additional step is necessary.  For an $\varepsilon>0$, we lift the measures to $S(H)^{\varepsilon}$ as in the proof of \cref{thm:parallel-hyperplanes}, apply \cref{thm:BHJ}, and then take $\varepsilon \to 0$.

This ensures that we get a half-space $H^+$ that has infinite rays in the direction $-e_{d+1}$ and such that $\sigma_i(H^+) \ge \alpha_i \cdot \sigma_i(\rr^d)$ and $\sigma_i(H^-) \ge (1-\alpha_i) \cdot \sigma_i(\rr^d)$, where $H^-$ is the complementary closed half-space of $H^+$.  Since $\sigma_i(H^+)>0$ for all $i$, we know that the boundary of $H^+$ cannot be one of the two hyperplane components of $S(H)$.  Therefore the measure $\sigma_i$ of the boundary of $H^+$ is zero for all $i$ and $\sigma_i(H^+) = \alpha_i \cdot \mu_i(H^+)$.  The same arguments that B\'ar\'any, Hubard, and Jer\'onimo used to show the uniqueness in their theorem can be applied to show that $H^+$ is uniquely defined.  The uniqueness also implies that $H^+$ changes continuously as we modify $(v_1, \ldots, v_{d-1})$.

Let $n \in S^d \subset \rr^{d+1}$ be the normal vector to the boundary of $H^+$ that points in the direction of $H^+$.  We can use this to construct a function
\begin{align*}
    g: V_{d-1} (\rr^{d}) & \to \rr^{d-1} \times \ldots \times \rr^{1} \\
    (v_1, \ldots, v_{d-1}) & \mapsto (x_1, \ldots, x_d).
\end{align*}
For each $i=1,\ldots, d-1$, the first coordinate of $x_i \in \rr^{d-i}$ is $\langle n, (v_i,0) \rangle$ and the rest are zero.  This function is well defined and continuous.  If we flip the sign of $v_i$, the surface $S(H)$ does not change.  The vector $n \in S^d$ is not affected by this change, so only the sign of $x_i$ changes.  Therefore, the function $g$ is $(\zz_2)^{d-1}$-equivariant.  By \cref{thm:topo1}, the function $g$  must have a zero.  This implies that the projection of $H^+ \cap S(H)$ onto $\rr^d$ is the region between two hyperplanes parallel to $H$.
\end{proof}

The construction of the function $g$ only uses $d-1$ out of the $d(d-1)/2$ coordinates that \cref{thm:topo1} makes available.  It would be interesting to know if much stronger conditions can be imposed on $H$.

We also have consequences similar to \cref{cor:Bagel}.  We say that a family of sets $K_1, \ldots, K_{d+2}$ in $\rr^d$ is \textit{well separated by spheres} if for any subset way to split them into two families $I, J$, there is a sphere that separates $I$ and $J$, i.e., it contains the union of one of the sets and leaves out the union of the other set.

\begin{corollary}
Let $d$ be a positive integer and $\mu_1, \ldots, \mu_{d+2}$ be measures in $\rr^d$ absolutely continuous with respect to the Lebesgue measure.  Suppose that the supports $K_1, \ldots, K_{d+2}$ of $\mu_1, \ldots, \mu_{d+2}$ are well separated by spheres.
Let $\alpha_1, \ldots, \alpha_{d+2}$ be real numbers in $(0,1)$. Then, there exist two concentric $S_1, S_2$ in $\rr^d$ so that the region $A$ between them satisfies
\[
\mu_i(A) = \alpha_i \cdot \mu_i(\rr^d) \qquad \mbox{for all }i=1,\ldots, d+2.
\]
\end{corollary}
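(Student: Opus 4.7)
The plan is to mimic the proof of \cref{cor:Bagel}, but invoke \cref{thm:parallel-hyperplanes-separated} in place of \cref{thm:parallel-hyperplanes}. Lift $\rr^d$ to the paraboloid
\[
\mathcal{P} = \left\{\left(x_1,\ldots, x_d, \sum_{i=1}^d x_i^2\right)\in \rr^{d+1}\right\},
\]
and let $\mu_1', \ldots, \mu_{d+2}'$ be the lifts of $\mu_1, \ldots, \mu_{d+2}$. Under this lift, every sphere $S \subset \rr^d$ corresponds to the intersection of $\mathcal{P}$ with a (non-vertical) hyperplane $\widehat{S} \subset \rr^{d+1}$, inside/outside of $S$ corresponds to below/above $\widehat{S}$, and concentric spheres correspond to parallel hyperplanes. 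Thus a sphere separation of a subfamily of $\{K_1, \ldots, K_{d+2}\}$ from its complement in $\rr^d$ translates directly to a hyperplane separation of the lifted supports in $\rr^{d+1}$, so the lifted family is well separated in the sense required by \cref{thm:parallel-hyperplanes-separated}.

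The lifted measures $\mu_i'$ are supported on the $d$-dimensional surface $\mathcal{P}$ and hence are not absolutely continuous in $\rr^{d+1}$, so they cannot be fed directly into \cref{thm:parallel-hyperplanes-separated}. I would resolve this exactly as was done in the proofs of \cref{thm:parallel-hyperplanes} and \cref{thm:parallel-hyperplanes-separated}: thicken each lifted measure inside a tubular neighborhood of $\mathcal{P}$ of width $\varepsilon$ by spreading its mass uniformly along the normal direction to $\mathcal{P}$, obtaining absolutely continuous measures $\mu_i^{\varepsilon}$ in $\rr^{d+1}$. For small enough $\varepsilon$, the supports remain pairwise well separated (sphere separations in $\rr^d$ give hyperplane separations that are strict, and strict separation is stable under small perturbations). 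Applying \cref{thm:parallel-hyperplanes-separated} in $\rr^{d+1}$ yields two parallel hyperplanes $H_1^{\varepsilon}, H_2^{\varepsilon}$ such that the slab between them has $\mu_i^{\varepsilon}$-measure exactly $\alpha_i \cdot \mu_i^{\varepsilon}(\rr^{d+1})$ for each $i$. After normalizing the common normal direction on $S^d$ and the affine positions in a bounded region (the supports are compact), a standard compactness argument extracts a subsequence $\varepsilon_k \to 0$ along which $(H_1^{\varepsilon_k}, H_2^{\varepsilon_k})$ converges to a pair of parallel hyperplanes $(H_1, H_2)$ in $\rr^{d+1}$; by weak convergence of the thickened measures to the lifts, the limit slab cuts exactly the required fraction $\alpha_i$ of each $\mu_i'$.

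It remains to project back: provided the common normal to $H_1, H_2$ has nonzero $e_{d+1}$-component, $H_1 \cap \mathcal{P}$ and $H_2 \cap \mathcal{P}$ project to concentric spheres $S_1, S_2$ in $\rr^d$, and the region between them has $\mu_i$-measure exactly $\alpha_i \cdot \mu_i(\rr^d)$, as required. The main obstacle is ruling out the degenerate case in which the limit hyperplanes $H_1, H_2$ are vertical (normal orthogonal to $e_{d+1}$): such a pair projects to parallel hyperplanes in $\rr^d$ rather than to concentric spheres. The natural way to exclude this is to exploit the sphere-separation hypothesis more strongly than mere hyperplane separation — since every bi-partition of the $K_i$ admits a genuine sphere as a separator, the centers of the separating spheres for various bi-partitions obstruct a uniformly vertical solution — or, alternatively, to perturb the measures slightly so that \cref{thm:parallel-hyperplanes-separated} applied to the thickened family must return a non-vertical pair, and to push the perturbation to zero. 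Either route will need care, and I expect this to be the delicate part of the argument; the rest follows the same pattern as \cref{cor:Bagel}.
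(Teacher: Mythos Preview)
Your approach is exactly the one the paper takes: lift to the paraboloid, translate sphere separation into hyperplane separation, invoke \cref{thm:parallel-hyperplanes-separated}, and wave through the absolute-continuity mismatch via an approximation argument (the paper literally writes ``a standard approximation argument fixes this problem'' and leaves it at that, so your thickening discussion is already more than the paper provides). As for the vertical-hyperplane degeneracy you flag, the paper does not address it at all; in this paper hyperplanes are tacitly treated as degenerate spheres (cf.\ \cref{thm:circular-sandwich}), so you can stop worrying about excluding that case and simply allow it as a limiting configuration.
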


\begin{proof}
We lift $\rr^d$ to the paraboloid
\[
\mathcal{P} = \left\{\left(x_1,\ldots, x_d, \sum_{i=1}^d x_i^2\right)\in \rr^{d+1}\right\}.
\]
A sphere in $\rr^d$ separating two families $I, J$ of measure supports translates to a hyperplane in $\rr^{d+1}$ separating the lift of those supports.  We apply \cref{thm:parallel-hyperplanes-separated} to the family of measures induced on $\mathcal{P}$ and we are done.

Even though the set of lifted measures do not satisfy the conditions of \cref{thm:parallel-hyperplanes-separated}, a standard approximation argument fixes this problem.
\end{proof}

\section{Equipartition with Polytopes and polyhedral surfaces of bounded complexity}\label{sec:polyhedral}
 In previous sections, the number of measures to be partitioned was constrained by the dimension of the ambient space, while the boundaries of the partition were relatively simple. In this section we consider mass partitions of a family of $n$ measures in $\R^d$, where $n$ can be much larger than $d$.  We do so by increasing the complexity of the boundary of the partition.  We focus on partitions by polyhedral surfaces.

\begin{definition}\label{def:nicely-separated}
Let $\mathcal{F}=\{\mu_1,\ldots, \mu_n\}$ be a family of finite absolutely continuous measures in $\R^d$ with support $K_i$ for each $1\leq i\leq n$. The supports are called \textit{nicely separated} if for each $1\leq i\leq n$, there exists a hyperplane $H_i$ such that $K_i\cap H_i^+=\emptyset$ and $K_j\cap H_i^-=\emptyset$ for all $j\neq i$.
\end{definition}
The maximum number of well separated measures is $d+1$, due to Radon's theorem.  For nicely separated measures we only want to be able to separate any measure for the union of the other $n-1$, and not any two subsets.  An example of nicely separated measures are $n$ measures such that each is concentrated near a vertex of a convex polytope.
\begin{figure}[h!]
\centering
  \includegraphics[width=1.2\textwidth]{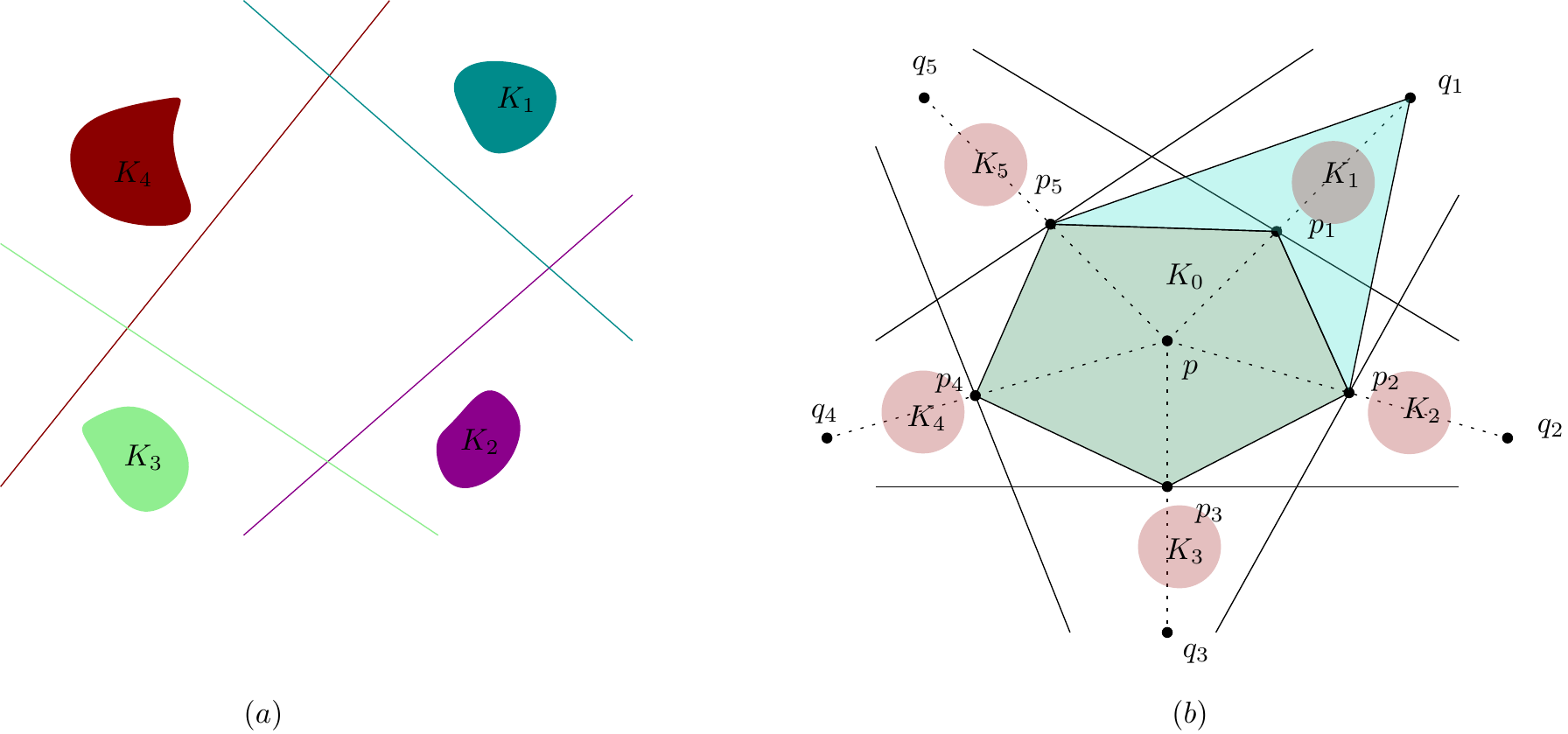}
  \caption{(a) An example of four nicely separated measures in $\rr^2$.  (b) An example of five nicely separated and concentrated measures in $\rr^2$.  Notice that if we take $q_1$ instead of $p_1$ to form the convex hull, the resulting polygon contains all of $K_1$.}
  \label{fig:nicely-separated}
\end{figure}

 We define a polyhedron in $\rr^d$ to be a finite intersection of closed half-spaces. A facet of a polyhedron is a $(d-1)$-dimensional face, and a vertex of a polyhedron is a zero dimensional face.

\begin{theorem}\label{thm:n-faces}
Let $\mathcal{F}=\{\mu_1,\ldots, \mu_n\}$ be a family of finite absolutely continuous measures in $\R^d$ with nicely separated supports $K_i$ for all $1\leq i\leq n$, and let $\alpha_1,\ldots,\alpha_{n}$ be real numbers in $(0,1)$. Then, there exists a polyhedron $P$ with at most $n$ facets such that $\mu_i(P)=\alpha_i\cdot \mu_i(\R^d)$ for every $1\leq i\leq n$.
\end{theorem}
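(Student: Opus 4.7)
The plan is to choose, independently for each measure, a single half-space and then take the intersection, which will automatically be a polyhedron with at most $n$ facets. The nicely separated hypothesis provides, for every $i$, a hyperplane $H_i$ with $K_i$ strictly on the $H_i^-$ side and every other $K_j$ strictly on the $H_i^+$ side. Let $\nu_i$ be the unit normal to $H_i$ pointing into $H_i^+$ and, for $t \geq 0$, let $P_i(t)$ be the closed half-space obtained by translating $H_i^+$ by a distance $t$ in the direction $-\nu_i$, so $P_i(0) = H_i^+$. Since the boundary of $P_i(t)$ slides toward $K_i$ (and away from the other supports), the family $\{P_i(t)\}_{t \geq 0}$ is nested and increasing, and in particular $P_i(t) \supset H_i^+ \supset K_j$ for every $j \neq i$ and every $t \geq 0$.

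Next, I would use absolute continuity of $\mu_i$ (so that the bounding hyperplane of $P_i(t)$ always has $\mu_i$-measure zero) to conclude that $t \mapsto \mu_i(P_i(t))$ is continuous. It equals $0$ at $t = 0$ and equals $\mu_i(\R^d)$ once $t$ is large enough for $P_i(t)$ to contain $K_i$; the intermediate value theorem then supplies $t_i^\ast$ with $\mu_i(P_i(t_i^\ast)) = \alpha_i \cdot \mu_i(\R^d)$. Set
\[
P := \bigcap_{i=1}^n P_i(t_i^\ast).
\]

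The final step is to verify that $\mu_k(P) = \alpha_k \cdot \mu_k(\R^d)$ for each $k$. For every $i \neq k$ we have $K_k \subset H_i^+ \subset P_i(t_i^\ast)$, so the constraints indexed by $i \neq k$ do not remove any mass of $\mu_k$; in other words $P \cap K_k = P_k(t_k^\ast) \cap K_k$, giving $\mu_k(P) = \mu_k(P_k(t_k^\ast)) = \alpha_k \cdot \mu_k(\R^d)$.

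There is no genuine topological obstacle here: the whole force of ``nicely separated'' is that the $n$ choices can be decoupled, one per measure, so no fixed-point or Borsuk--Ulam machinery is required. The only delicate point is confirming that each translation $P_i(t)$ is set up in the correct direction so that enlarging it to catch a fraction $\alpha_i$ of $\mu_i$ never excludes any $K_j$ for $j \neq i$; this is exactly what the separating hyperplane from the hypothesis guarantees.
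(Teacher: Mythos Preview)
Your proof is correct and follows essentially the same approach as the paper: both start from the separating hyperplanes $H_i$ guaranteed by the nicely-separated hypothesis, translate each $H_i$ independently toward $K_i$ until the desired fraction $\alpha_i$ is captured, and observe that these translations never touch the other supports $K_j$, so the intersection of the resulting half-spaces does the job. The paper phrases this as moving the hyperplanes one at a time while you choose all the $t_i^\ast$ simultaneously, but this is a cosmetic difference.
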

\begin{proof}
Because the supports are nicely separated, for each $1\leq i \leq n$, we can fix a hyperplane $H_i$ with $K_i\cap H_i^+ = \emptyset$ and $K_j\cap H_i^-=\emptyset$ for all $j\neq i$. Notice that a polyhedron $P = \bigcap_{ i=1}^n H_i^+$ has the property $\mu_i(P) = 0$ for every $1\leq i \leq n$. 

Now, consider $\mu_1$. Let $v$ be the normal vector to the hyperplane $H_1$ pointing in the direction of $H^-$. We can move $H_1$ in the direction of $v$ until we have the desired portion of the measure $\mu_1$, so we can fix $H_1'\parallel H_1$ with $\mu_1(H_1'^+) = \alpha \cdot \mu_1(\R^d)$. By letting $P' = \left(\bigcap_{i=2}^n H_i^+\right) \cap H_1'^+ $, we have $\mu_1(P') = \alpha_1 \cdot \mu_1(\R^d)$ because $\mu_1\big(\bigcap_{i=2}^{n}H_i^+\big) = \mu_1(\R^d)$.
Moreover, because $K_j\cap H_1^-=\emptyset$ for each $j\neq 1$, moving $H_1$ to the direction of $H_1^-$ does not interfere with the rest of the measures $\mu_2,\ldots,\mu_n$. We can repeat the same process for $\mu_2,\ldots,\mu_n$ to find a convex polyhedron of at most $n$ facets with the desired property.
\end{proof}

While \cref{thm:n-faces} allows for a mass partition with a polyhedron of $n$ facets, we can quantify the complexity of a compact polyhedron by the number of vertices as well. \cref{thm:n-vertices} proves a mass partition with a polyhedron of $n$ vertices, but this time for $n$ measures with a slightly stronger separation condition. We will use a similar idea to the proof of \cref{thm:BHJ}.

Let $\mu_1, \ldots, \mu_n$ be a family of nicely separated measures in $\rr^d$.  Let $H_i$ be the hyperplane separating $K_i$ from the rest of the supports, as in \cref{def:nicely-separated}.  For $n \ge d+1$, by Helly's theorem we know that $P=\bigcap_{i=1}^n H^+_i \neq \emptyset$. We say that the measures are \textit{concentrated} if the following happens.  There exists a point $p \in P$ and points $p_i, q_i$ for $i = 1,\ldots, n$ so that the following holds.
\begin{itemize}
    \item For each $i=1,\ldots,n$, $p_i \in H_i \cap P$.  We denote $K_0=\conv\{p_1,\ldots, p_n\}$.
    \item We have $p \in K_0$.
    \item For each $i=1,\ldots, n$, $q_i$ is in the ray $pp_i$ and in $\bigcap_{i' \neq i}H^+_{i'}$.
    \item For each $i=1,\ldots, n$, we have $K_i \subset \conv (\{q_i\}\cup K_0)$.
\end{itemize}

An example is illustrated in \cref{fig:nicely-separated}(b).

\begin{theorem}\label{thm:n-vertices}
Let $n,d$ be positive integers.  Let $\mathcal{F}=\{\mu_1,\ldots, \mu_n\}$ be a family of nicely separated and concentrated measures in $\R^d$, each absolutely continuous.  Let $ \alpha_1,\ldots,\alpha_{n}$ be real numbers in $(0,1)$. Then, there exists a polytope $K$ with $n$ vertices such that $\mu_i(K)=\alpha_i\cdot \mu_i(\R^d)$ for every $1\leq i\leq n$.
\end{theorem}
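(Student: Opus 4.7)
The plan is to adapt the strategy from the proof of \cref{thm:BHJ}: build a continuous self-map $g$ of the cube $Q = [0,1]^n$ from a parametrized family of polytopes, verify that $g$ sends each face of $Q$ into itself, and then conclude with a degree argument that $g$ is surjective. First, for each $i$, let $v_i(t_i) = (1-t_i)p_i + t_i q_i$ be the point sliding along the segment from $p_i$ to $q_i$, and for $t = (t_1,\ldots,t_n) \in Q$ put $K(t) = \conv\{v_1(t_1),\ldots,v_n(t_n)\}$, a polytope with at most $n$ vertices.

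The technical heart of the argument is to establish $K_0 \subset K(t)$ for every $t \in Q$. Writing $q_i = p + c_i(p_i - p)$ with $c_i \geq 1$ (so that $p_i$ lies on the segment $[p,q_i]$), a direct computation gives $v_i(t_i) = p + r_i(t_i)(p_i - p)$ with $r_i(t_i) = 1 + t_i(c_i - 1) \in [1, c_i]$. Expressing $p$ as a convex combination $p = \sum \lambda_j p_j$ (which exists because $p \in K_0$) and substituting $p_j = p + (1/r_j)(v_j - p)$ rewrites $p$ as a convex combination of the $v_j(t_j)$'s with coefficients proportional to $\lambda_j/r_j(t_j)$, showing $p \in K(t)$. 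Since $r_j \geq 1$, each $p_j$ lies on the segment $[p, v_j(t_j)]$, so $p_j \in K(t)$ for all $j$; thus $K_0 \subset K(t)$.

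Define $g: Q \to Q$ by $g(t) = \bigl(\mu_1(K(t))/\mu_1(\rr^d),\ldots,\mu_n(K(t))/\mu_n(\rr^d)\bigr)$. On the face $\{t_i = 0\}$, the vertex $v_i = p_i$ lies on $H_i$, and for $j \neq i$ both $p_j$ and $q_j$ lie in $H_i^+$, so $v_j(t_j) \in H_i^+$; this forces $K(t) \subset H_i^+$, and the nicely-separated hypothesis $K_i \cap H_i^+ = \emptyset$ gives $\mu_i(K(t)) = 0$. On the face $\{t_i = 1\}$, the inclusion $K_0 \cup \{q_i\} \subset K(t)$ implies $\conv(\{q_i\}\cup K_0) \subset K(t)$, and the concentration hypothesis $K_i \subset \conv(\{q_i\}\cup K_0)$ then gives $\mu_i(K(t)) = \mu_i(\rr^d)$. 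Consequently $g$ maps each face of $Q$ into itself and fixes every cube vertex $v_I$.

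Exactly as in the proof of \cref{thm:BHJ}, the linear homotopy $(t,s) \mapsto (1-s)t + s\,g(t)$ stays inside $\partial Q$ (each face being convex and invariant under $g$), so $g|_{\partial Q}$ is homotopic to the identity on $\partial Q$, has degree one, and $g$ is therefore surjective. Any $t^* \in Q$ with $g(t^*) = (\alpha_1,\ldots,\alpha_n)$ yields the desired polytope $K(t^*)$. The main obstacle I anticipate is the uniform inclusion $K_0 \subset K(t)$; once that is in hand, the boundary analysis and the degree argument are routine continuations of the method used for \cref{thm:BHJ}.
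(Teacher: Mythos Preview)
Your proof is correct and follows essentially the same strategy as the paper's: parametrize a family of polytopes over $Q=[0,1]^n$ by sliding each vertex along the segment from $p_i$ to $q_i$, check that the resulting map $g:Q\to Q$ sends each face of $Q$ into itself, and conclude surjectivity by a degree argument. Your write-up additionally supplies a clean justification of the inclusion $K_0\subset K(t)$, which the paper uses implicitly for the $t_i=1$ boundary case but does not spell out.
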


Note that the intuitive idea we used to prove \cref{thm:n-faces} would indicate that we should slide each $p_i$ towards $q_i$ until we have the desired measure.  The issue with this is that the values of other measures are no longer fixed.

\begin{proof}
Consider the hypercube $Q=[0,1]^n$.  For $x=(x_1,\ldots, x_n) \in Q$, and $i=1,\ldots, n$, let $y_i = (1-x_i)q_i+x_i p_i$.  We define
\[
K(x) = \conv \{y_1,\ldots, y_n\}.
\]
This convex set allows us to construct a function
\begin{align*}
    f: Q & \to Q \\
    x & \mapsto \left( \frac{\mu_1(K(x))}{\mu_1(\rr^d)}, \ldots, \frac{\mu_n(K(x))}{\mu_n(\rr^d)}\right).
\end{align*}

The function is continuous.  From the conditions of the measures, we can see that for every vertex $v$ of $Q$, we have $f(v) = v$.  However, we have a stronger condition.  For every face $\sigma \subset Q$, we have $f(\sigma) \subset \sigma$.  This is because if a coordinate $x_i$ of $x$ equals zero, the $K(x) \subset H_i^+$, so $\mu_i(K(x)) = 0$.  If $x_i=1$, then $K(x) \supset \conv\{\{q_i\}\cup K_0\}$, so $\mu_i(K(x)) = \mu_i (\rr^d)$.  Therefore $f$ is of degree one on the boundary and must be surjective.  There is a point $x \in Q$ such that $f(x) = (\alpha_1,\ldots,\alpha_n)$, which implies that $K(x)$ is the polytope we were looking for.
\end{proof}

\section{remarks and open problems}\label{sec:remarks}

To prove \cref{thm:same-fraction}, we need to strengthen \cref{lem:separating-fixed-direction}.

\begin{lemma}\label{lem:separating-fixed-direction-strong}
Let $m, n$ be positive integers, $\mu_1, \ldots, \mu_{n}$ be $n$ finite absolutely continuous measures in $\rr^d$, and $v$ be a unit vector in $\rr^d$.  There either exists $m-1$ hyperplanes orthogonal to $v$ that divide $\rr^d$ into $m$ regions $R_1,\ldots, R_m$ of equal measure for each $\mu_i$ simultaneously or there exist $m-1$ hyperplanes orthogonal to $v$ such that they divide $\rr^d$ into $m$ regions $R_1, \ldots, R_m$ such that for every $j=1,\ldots, m$ there exists an $i$ such that
\begin{align*}
    \mu_i (R_j) & < \frac{1}{m}\mu_i(\rr^d).
\end{align*}
\end{lemma}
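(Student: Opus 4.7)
The plan is to generalize the argument of \cref{lem:separating-fixed-direction} by starting from an equipartition of a single reference measure and then perturbing the tuple of dividing positions along $v$. I would parametrize each candidate hyperplane orthogonal to $v$ by its signed position along $v$, and write $\tau_{i,j}$ for the unique position such that $\mu_i(\{x : \langle x,v\rangle \leq \tau_{i,j}\}) = (j/m)\mu_i(\rr^d)$. If for every $j=1,\ldots,m-1$ the numbers $\tau_{1,j},\ldots,\tau_{n,j}$ all coincide, the corresponding hyperplanes equipartition every $\mu_i$ simultaneously and we are in the first alternative. Otherwise, I would construct the desired tuple by perturbing $s^* = (\tau_{1,1},\ldots,\tau_{1,m-1})$, which already equipartitions $\mu_1$.

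The key dichotomy comes from a simple counting argument. Call an index $j\in\{1,\ldots,m\}$ \emph{bad} if $\mu_i(R_j(s^*)) \geq \mu_i(\rr^d)/m$ for every $i$, and \emph{good} otherwise. If every $j$ were bad, summing the inequalities across $j$ would give $\mu_i(\rr^d)=\sum_j \mu_i(R_j(s^*)) \geq \mu_i(\rr^d)$, forcing equality everywhere and hence $\mu_i(R_j(s^*)) = \mu_i(\rr^d)/m$ for all $i$ and $j$. This would be a simultaneous equipartition, contradicting the assumption that the first alternative has failed. So the set $B$ of bad indices satisfies $|B|<m$.

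The final step is a perturbation in $\mu_1$-quantile coordinates. Set $u_j = \mu_1(\{x : \langle x,v\rangle \leq s_j\})/\mu_1(\rr^d)$, so that $u^*_j = j/m$, and consider perturbations $u_j(\epsilon) = j/m + \epsilon\tilde v_j$ with the convention $\tilde v_0 = \tilde v_m = 0$. I would choose $(\tilde v_1,\ldots,\tilde v_{m-1})$ so that $\tilde v_j - \tilde v_{j-1} < 0$ for every $j \in B$; a concrete recipe is to set this increment equal to $-1$ for each $j\in B$ and equal to $|B|/(m-|B|)$ for each $j\notin B$, which telescopes to zero precisely because $|B|<m$. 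Inverting through the continuous strictly increasing CDF of $\mu_1$ yields positions $s_j(\epsilon)$ satisfying the exact identity $\mu_1(R_j(\epsilon)) = \mu_1(\rr^d)(1/m + \epsilon(\tilde v_j - \tilde v_{j-1}))$. For every bad $j$ and every $\epsilon>0$ this is strictly less than $\mu_1(\rr^d)/m$, so $i=1$ serves as a witness there. For each good $j$, the strict inequality $\mu_{i'}(R_j(s^*)) < \mu_{i'}(\rr^d)/m$ guaranteed for some $i'$ is preserved for small $\epsilon$ by continuity of $s_j(\epsilon)$ and of the measures, and the same smallness ensures the ordering $s_1(\epsilon)<\cdots<s_{m-1}(\epsilon)$.

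The main obstacle is to design a single perturbation that strictly decreases $\mu_1(R_j)$ for every bad $j$ at once; this is exactly where the telescoping constraint $\tilde v_0 = \tilde v_m = 0$ is restrictive, and the combinatorial argument above shows it can be satisfied precisely when $|B|<m$. A nice feature of this approach is that working entirely in $\mu_1$-quantile coordinates avoids any reference to densities or differentiability beyond what absolute continuity of $\mu_1$ already provides, so no smoothing of the other measures is required.
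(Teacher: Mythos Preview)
Your argument is correct and reaches the same conclusion, but the mechanics differ from the paper's. Both proofs start from the $\mu_1$-equipartition $s^*$ and perturb. The paper finds a \emph{single} good region $R_j$ (where some $\mu_i$ is strictly deficient), widens it slightly to preserve that deficiency, and then propagates outward: each widening makes the adjacent region strictly deficient in $\mu_1$, and one continues moving the outer hyperplanes until every region has a witness (namely $\mu_1$ for all regions except the original $R_j$). Your approach instead classifies \emph{all} regions as good or bad at once, uses the summation identity $\sum_j \mu_i(R_j)=\mu_i(\rr^d)$ to deduce $|B|<m$, and then designs a single global perturbation in $\mu_1$-quantile coordinates that shrinks every bad region simultaneously while enlarging the good ones. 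The paper's iterative scheme is more hands-on and needs no counting; your one-shot perturbation is cleaner and makes the role of $|B|<m$ explicit (it is exactly the condition allowing increments $\tilde v_j-\tilde v_{j-1}$ that are negative on $B$ yet telescope to zero). Working in quantile coordinates is a nice touch, since it makes the effect on $\mu_1$ exact rather than approximate; just note that the invertibility and continuity of $s_j(\epsilon)$ you invoke rely on the CDF of $\mu_1$ being strictly increasing near each $\tau_{1,j}$, which follows from the paper's standing convention that absolutely continuous measures assign positive mass to open subsets of their (connected) support.
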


\begin{proof}
Given parallel hyperplanes $H_1, \ldots, H_{m-1}$ in this order, we denote by $R_1, \ldots, R_m$ the regions they divide $\rr^d$ into such that $R_j$ is bounded by $H_{j-1}$ and $H_j$.  The unbounded regions $R_1$, $R_m$ are bounded only by $H_1$ and $H_m$ respectively.

We can find $m-1$ hyperplanes such that $\mu_1(R_j) = (1/m)\mu_1(\rr^d)$ for every $j$.  If these regions also form an equipartition for every other $\mu_i$, we are done.  Otherwise, there is an $i$ and a $j$ such that $\mu_i(R_j) < (1/m)\mu_i(\rr^d)$.  We can widen $R_j$ by moving $R_{j-1}$ and $R_j$ slightly apart so that we still have  $\mu_i(R_j) < (1/m)\mu_i(\rr^d)$.

Then, $\mu_1(R_{j-1}) < (1/m) \mu_1(\rr^d)$ and $\mu_1(R_{j+1}) < (1/m) \mu_1(\rr^d)$.  We can translate $H_{j-2}$ and $H_{j+1}$ away from $H_{j-1}$ and $H_j$ respectively so that these inequalities are preserved.  This makes $\mu_1(R_{j-2})$ and $\mu_1(R_{j+2})$ to be strictly reduced.  We continue this way until we are done. 
\end{proof}

Now, given $\mu_1, \ldots, \mu_{d+1}$ finite absolutely continuous measures in $\rr^d$, we construct a surface in $\rr^{d+1}$.  We take $v=e_{d}$ and find the $m-1$ hyperplanes $H_1, \ldots, H_m$ such that
\[
H_j = \{(x_1,\ldots, x_d) \in \rr^d: x_d = \lambda_j\}.
\]
For some $\lambda_1 < \ldots < \lambda_{m-1}$.  We define $\lambda_0 =-\infty$ and $\lambda_m = \infty$.  Let $h:\rr \to \rr$ be a convex function that is linear between $\lambda_j$ and $\lambda_{j+1}$ for each $j=0,\ldots, {m-1}$, but not between $\lambda_j$ and $\lambda_{j+2}$ for each $j=0,\ldots, m-2$.

Let $V$ be the surface in $\rr^{d+1}$ defined by the equation $x_{d+1}=h(x_d)$.  The set of points on or above $V$ is the intersection of $m$ closed half-spaces.  To prove \cref{thm:same-fraction} we repeat the proof of Akopyan and Karasev but we lift $\rr^d$ to $V$ instead of a paraboloid.

\begin{proof}[Proof of \cref{thm:same-fraction}]
By a subdivision argument, it suffices to prove the result when $n=p$ a prime number.  We apply \cref{lem:separating-fixed-direction-strong} with $m=p$.  If there are $p-1$ parallel hyperplanes that form an equipartition of the measures, we are done.  Otherwise, we lift $\rr^d$ to $\rr^{d+1}$ by lifting it to the surface $V$ defined above.  Let $\sigma_1, \ldots, \sigma_{d+1}$ be the measures induced by $\mu_1,\ldots, \mu_{d+1}$ on $V$.  It's known that we can split $\rr^{d+1}$ into $p$ convex sets $C_1, \ldots, C_p$ that form an equipartition of $\mu_1, \ldots, \mu_{d+1}$.  Since each of the regions $R_1, \ldots, R_p$ we constructed in $\rr^{d}$ have less than a $(1/p)$-fraction of some $\mu_i$ and $V$ is the boundary of a convex set, none of the boundaries between the sets $C_{j}$ can coincide with the hyperplanes defining $V$.

Moreover, the sets are $C_1, \ldots, C_p$ are induced by a generalized Voronoi diagram \cites{Karasev:2014gi, Blagojevic:2014ey}.  In other words, there are points (called sites) $s_1, \ldots, s_p$ in $\rr^{d+1}$ and real number $\beta_1, \ldots, \beta_p$ such that the $p$ convex regions
\[
C_j = \{x \in \rr^{d+1}: ||x-s_j||^2-\beta_j \le ||x - s_{j'}||^2-\beta_{j'} \mbox{ for } j' =1,\ldots, p\}
\]
form an equipartition of $\mu_1, \ldots, \mu_{d+1}$.  Since the set of point above $V$ is convex, if we take the region $C_j$ whose site $s_j$ has minimal $(d+1)$-th coordinate, when we project $C_j \cap V$ back to $\rr^d$ we get a convex set.  This is the set $K$ we are looking for.  The boundary of the corresponding $C_j$ is the union of at most $p-1$ hyperplanes (the ones dividing it from each other $C_{j'}$).  Each of those $p-1$ hyperplanes can intersect each of the $p$ hyperplanes defining $V$, forming at most $p(p-1)$ linear components of the boundary of $C_j \cap V$.  This gives us the bound on the number of half-spaces whose intersection is $K$.
\end{proof}

Earlier proofs of the equipartition result we used do not guarantee that the partition $C_1, \ldots, C_p$ comes directly from a generalized Voronoi diagram \cite{Soberon:2012kp}, which is important in this proof.  When $n$ is a prime power, the number of half-spaces we used grows logarithmically with $n$.  We wonder if this holds in general.

\begin{question}
Let $d$ be a fixed integer.  Determine if for every positive integer $n$ and any $d+1$ finite absolutely continuous measures $\mu_1, \ldots, \mu_{d+1}$ in $\rr^d$ there exists a convex set $K \subset \rr^d$ formed by the intersection of $O(\log n)$ half-spaces that contains exactly a $(1/n)$-fraction of each $\mu_i$.
\end{question}

We nicknamed \cref{cor:Bagel} the bagel ham sandwich theorem due to its drawing in $\rr^2$.  However, since the set used is the region between two concentric spheres, it certainly does not look like a Bagel in $\rr^3$.  We define a \textit{regular torus} in $\rr^3$ to be the any set of the form $\{x \in \rr^3: \operatorname{dist}(x,S)\le \alpha\}$ where $S$ is a flat circle in $\rr^3$ and $\alpha$ is a positive real number.

\begin{question}[Three-dimensional bagels]
Is is true that for any five absolutely continuous finite measures in $\rr^3$ there exists a regular torus containing exactly half of each measure?
\end{question}

With four measures the result holds, since when $S$ degenerates to a point the regular torus is a sphere.

One of the questions that motivated the work on this manuscript was inspired by a conjecture by Mikio Kano.  Kano conjectured that for any $n$ smooth measures in $\rr^2$ there exists a path formed only by horizontal and vertical segments, that takes at most $n-1$ turns, that simultaneously halves each measure.  The conjecture is only known for $k=1,2$ or if the path is allowed to go through infinity \cites{Uno:2009wk, Karasev:2016cn}.  We wonder if the following way to mix Kano's conjecture with \cref{thm:circular-sandwich} holds.

\begin{question}[Existence of square sandwiches]\label{question:square}
Is is true that for any three finite absolutely continuous measures in $\rr^2$ there exists a square that contains exactly half of each measure?
\end{question}

\cref{thm:parallel-hyperplanes} shows that we have a positive answer for rectangles (if the support of the measures are compact, we can cut the two lines given by \cref{thm:parallel-hyperplanes} by perpendicular segments sufficiently far away, otherwise we have degenerate rectangles).  However, it is still possible that for squares the answer to \cref{question:square} is affirmative.

\begin{bibdiv}
\begin{biblist}

\bib{Akopyan:2013jt}{article}{
      author={Akopyan, Arseniy},
      author={Karasev, Roman~N.},
       title={{Cutting the Same Fraction of Several Measures}},
        date={2013},
     journal={Discrete Comput. Geom.},
      volume={49},
      number={2},
       pages={402\ndash 410},
}

\bib{Blagojevic:2007ij}{article}{
      author={Blagojević, Pavle V.~M.},
      author={Blagojević, Aleksandra~Dimitrijević},
       title={{Using equivariant obstruction theory in combinatorial
  geometry}},
        date={2007},
     journal={Topology Appl.},
      volume={154},
      number={14},
       pages={2635\ndash 2655},
}

\bib{Blagojevic2018}{article}{
      author={Blagojevi{\'c}, Pavle V.~M.},
      author={Blagojevi{\'c}, Aleksandra~Dimitrijevi{\'c}},
      author={Karasev, Roman},
      author={Kliem, Jonathan},
       title={{More bisections by hyperplane arrangements}},
        date={2018},
     journal={arXiv preprint arXiv:1809.05364},
      volume={math.MG},
}

\bib{Bereg:2005voa}{article}{
      author={Bereg, Sergey},
       title={{Equipartitions of Measures by 2-Fans}},
        date={2005},
     journal={Discrete Comput. Geom.},
      volume={34},
      number={1},
       pages={87\ndash 96},
}

\bib{Barany:2008vv}{article}{
      author={B{\'a}r{\'a}ny, Imre},
      author={Hubard, Alfredo},
      author={Jerónimo, Jesús},
       title={{Slicing Convex Sets and Measures by a Hyperplane}},
        date={2008},
        ISSN={0179-5376},
     journal={Discrete Comput. Geom.},
      volume={39},
      number={1-3},
       pages={67\ndash 75},
}

\bib{Blagojevic2015}{article}{
      author={Blagojevi\'{c}, Pavle V.~M.},
      author={L\"{u}ck, Wolfgang},
      author={Ziegler, G\"{u}nter~M.},
       title={Equivariant topology of configuration spaces},
        date={2015},
        ISSN={1753-8416},
     journal={J. Topol.},
      volume={8},
      number={2},
       pages={414\ndash 456},
         url={https://doi.org/10.1112/jtopol/jtv002},
      review={\MR{3356767}},
}

\bib{Barany:2001fs}{article}{
      author={B{\'a}r{\'a}ny, Imre},
      author={Matou\v{s}ek, Ji\v{r}\'i},
       title={{Simultaneous partitions of measures by K-fans}},
        date={2001},
     journal={Discrete Comput. Geom.},
      volume={25},
      number={3},
       pages={317\ndash 334},
}

\bib{Barba:2019to}{article}{
      author={Barba, Luis},
      author={Pilz, Alexander},
      author={Schnider, Patrick},
       title={{Sharing a pizza: bisecting masses with two cuts}},
        date={2019},
     journal={arXiv preprint arXiv:1904.02502},
      volume={cs.CG},
}

\bib{Breuer2010}{article}{
      author={Breuer, Felix},
       title={{Uneven Splitting of Ham Sandwiches}},
        date={2010},
        ISSN={0179-5376},
     journal={Discrete Comput. Geom.},
      volume={43},
      number={4},
       pages={876\ndash 892},
}

\bib{Blagojevic:2014ey}{article}{
      author={Blagojević, Pavle V.~M.},
      author={Ziegler, G\"unter~M.},
       title={{Convex equipartitions via Equivariant Obstruction Theory}},
        date={2014},
     journal={Israel J. Math.},
      volume={200},
      number={1},
       pages={49\ndash 77},
}

\bib{Chan2020}{article}{
      author={Chan, Yu~Hin},
      author={Chen, Shujian},
      author={Frick, Florian},
      author={Hull, J.~Tristan},
       title={{Borsuk-Ulam theorems for products of spheres and Stiefel
  manifolds revisited}},
        date={2020},
     journal={Topol. Methods Nonlinear Anal.},
      volume={55},
      number={2},
       pages={553\ndash 564},
}

\bib{Fadell:1988tm}{article}{
      author={Fadell, Edward},
      author={Husseini, Sufian},
       title={{An ideal-valued cohomological index theory with applications to
  Borsuk—Ulam and Bourgin—Yang theorems}},
        date={1988},
     journal={Ergodic Theory Dynam. Systems},
      volume={8},
       pages={73\ndash 85},
}

\bib{Hubard2020}{article}{
      author={Hubard, Alfredo},
      author={Karasev, Roman},
       title={Bisecting measures with hyperplane arrangements},
        date={2020},
     journal={Math. Proc. Cambridge Philos. Soc.},
      volume={169},
      number={3},
       pages={639\ndash 647},
}

\bib{Karasev:2014gi}{article}{
      author={Karasev, Roman~N.},
      author={Hubard, Alfredo},
      author={Aronov, Boris},
       title={{Convex equipartitions: the spicy chicken theorem}},
        date={2014},
     journal={Geom. Dedicata},
      volume={170},
      number={1},
       pages={263\ndash 279},
}

\bib{Klee1997}{article}{
      author={Klee, Victor},
      author={Lewis, Ted},
      author={Von~Hohenbalken, Balder},
       title={Appollonius revisited: supporting spheres for sundered systems},
        date={1997},
        ISSN={0179-5376},
     journal={Discrete Comput. Geom.},
      volume={18},
      number={4},
       pages={385\ndash 395},
         url={https://doi.org/10.1007/PL00009324},
}

\bib{Karasev:2016cn}{article}{
      author={Karasev, Roman~N.},
      author={Roldán-Pensado, Edgardo},
      author={Soberón, Pablo},
       title={{Measure partitions using hyperplanes with fixed directions}},
        date={2016},
     journal={Israel J. Math.},
      volume={212},
      number={2},
       pages={705\ndash 728},
}

\bib{matousek2003using}{book}{
      author={Matou\v{s}ek, Ji\v{r}\'{\i}},
       title={Using the {B}orsuk-{U}lam theorem: Lectures on topological
  methods in combinatorics and geometry},
      series={Universitext},
   publisher={Springer-Verlag, Berlin},
        date={2003},
        ISBN={3-540-00362-2},
}

\bib{Manta2021}{article}{
      author={Manta, Michael~N.},
      author={Sober{\'o}n, Pablo},
       title={{Generalizations of the Yao--Yao partition theorem and the
  central transversal theorem}},
        date={2021},
     journal={arXiv preprint arXiv:2107.06233},
      volume={math.CO},
}

\bib{Mus12}{article}{
      author={Musin, Oleg},
       title={{Borsuk--Ulam type theorems for manifolds}},
        date={2012},
     journal={Proc. Amer. Math. Soc.},
      volume={140},
      number={7},
       pages={2551\ndash 2560},
}

\bib{RoldanPensado2021}{article}{
      author={R{old{\'a}n-Pensado}, Edgardo},
      author={Sober{\'o}n, Pablo},
       title={A survey of mass partitions},
        date={2021},
     journal={Bull. Amer. Math. Soc.},
        note={Electronically published on February 24, 2021, DOI:
  https://doi.org/10.1090/bull/1725 (to appear in print).},
}

\bib{Schnider:2019ua}{article}{
      author={Schnider, Patrick},
       title={{Equipartitions with Wedges and Cones}},
        date={2019},
     journal={arXiv preprint arXiv:1910.13352},
      volume={cs.CG},
}

\bib{Soberon:2012kp}{article}{
      author={Sober{\'o}n, Pablo},
       title={{Balanced Convex Partitions of Measures in $R^d$}},
        date={2012},
     journal={Mathematika},
      volume={58},
      number={01},
       pages={71\ndash 76},
}

\bib{Stone:1942hu}{article}{
      author={Stone, Arthur~H.},
      author={Tukey, John~W.},
       title={{Generalized “sandwich” theorems}},
        date={1942},
        ISSN={0012-7094},
     journal={Duke Math. J.},
      volume={9},
      number={2},
       pages={356\ndash 359},
}

\bib{Steinhaus1938}{article}{
      author={Steinhaus, Hugo},
       title={A note on the ham sandwich theorem},
        date={1938},
     journal={Mathesis Polska},
      volume={9},
       pages={26\ndash 28},
}

\bib{Steinhaus1945}{article}{
      author={Steinhaus, Hugo},
       title={Sur la division des ensembles de l'espace par les plans et des
  ensembles plans par les cercles},
        date={1945},
     journal={Fund. Math.},
      volume={33},
      number={1},
       pages={245\ndash 263},
}

\bib{Uno:2009wk}{article}{
      author={Uno, Miyuki},
      author={Kawano, Tomoharu},
      author={Kano, Mikio},
       title={{Bisections of two sets of points in the plane lattice}},
        date={2009},
     journal={IEICE Transactions on Fundamentals of Electronics, Communications
  and Computer Sciences},
      volume={92},
      number={2},
       pages={502\ndash 507},
}

\bib{Zivaljevic2017}{incollection}{
      author={{\v{Z}}ivaljevi{\'c}, Rade~T.},
       title={Topological methods in discrete geometry},
        date={2017},
   booktitle={{Handbook of Discrete and Computational Geometry}},
     edition={Third},
   publisher={CRC Press},
       pages={551\ndash 580},
}

\end{biblist}
\end{bibdiv}

\end{document}